\DeclareMathAlphabet{\mathpzc}{OT1}{pzc}{m}{it}
\newcommand\xqed[1]{\leavevmode\unskip\penalty9999 \hbox{}\nobreak\hfill \quad\hbox{#1}}
\newcommand{\exampleSymbol}{\xqed{$\triangle$}}
\newtheorem{theorem}{Theorem}[section]
\newtheorem{definition}[theorem]{Definition}
\theoremstyle{definition}
\newtheorem{example}[theorem]{Example}
\newtheorem{remark}[theorem]{Remark}
\title{Tensor-generated fractals: Using tensor decompositions for creating self-similar patterns}
\author[1]{Patrick Gel\ss}
\author[1,2]{Christof Sch\"utte}
\affil[1]{Department of Mathematics and Computer Science, Freie Universit\"at Berlin, Germany}
\affil[2]{Zuse Institute Berlin, Germany}
\begin{document}
\maketitle

\begin{abstract}

The term \emph{fractal} describes a class of complex structures exhibiting self-similarity across different scales. Fractal patterns can be created by using various techniques such as finite subdivision rules and iterated function systems. In this paper, we will present a method for the construction of geometric fractals that exploits Kronecker products and tensor decompositions, which can be regarded as a generalization of matrix factorizations. We will show how to create several well-known examples for one-, two-, and three-dimensional self-similar structures. Additionally, the proposed method will be extended to the construction of fractals in arbitrary dimensions.

\end{abstract}

\section{Introduction}

Around 1960, the mathematician L.~Richardson observed that the lengths of international borders and coastlines differed widely depending on different sources. In particular, he deduced from the available data that the measured length of a coastline strongly depends on the unit lengths used for measuring and (empirically) increases without limit as the unit length decreases towards 0. This observation was later called the \emph{coastline paradox}. However, as Richardson stated in \cite{RICHARDSON1961}, the measured length $L$ can be approximately expressed as 
\begin{equation*}
  L(r) = \lambda \cdot r^m,
\end{equation*}
where $r \in \mathbb{R}$ is the unit length and $\lambda, m \in \mathbb{R}$ are specific constants. A few years later, B.~Mandelbrot read his publications and rewrote Richardson's equation by replacing $L(r)$ with $N(r) \cdot r$, where $N(r)$ is the number of used units, and by replacing $m$ with $1-D$, i.e.
\begin{equation*}
  N(r) = \lambda \cdot r^{-D} \quad \Leftrightarrow \quad D = \frac{\ln(\lambda) - \ln(N(r))}{\ln(r)},
\end{equation*}
see \cite{MANDELBROT1967}. Mandelbrot interpreted the constant $D \in \mathbb{R}$ as a generalized dimension by comparing coastlines with so-called \emph{monster curves}, e.g.~the Koch curve \cite{KOCH1904} or the curve defined by the Weierstrass function \cite{WEIERSTRASS1886}. These curves were called ``monsters'' because their properties were inconsistent with generally accepted principles of mathematics. More precisely, monster curves are continuous but nowhere differentiable. They are early examples of topological objects displaying \emph{self-similarity}, a term which describes the property of an object to be (approximately) similar to parts of itself. In 1975, Mandelbrot coined the word ``\emph{fractal}'' \cite{MANDELBROT1975}, denoting objects whose Hausdorff--Besicovitch dimensions strictly exceeds their topological dimensions, see Section \ref{sec:Fractal Geometry}. Fractals commonly show self-similarity at increasingly small scales. Approximate fractals can be found in many places in our environment. That is, we see patterns in nature which display quasi self-similarity or statistical self-similarity, respectively, at different scales. Besides coastlines, further examples are river networks \cite{RINALDO1993}, tree branches \cite{ZEIDE1991}, and blood vessels \cite{PEITGEN2004}. For more examples of fractal structures in nature, we refer to \cite{MANDELBROT1982}. 

In general, fractals often have various remarkable properties and are therefore of high interest from a mathematical point of view. Since the 19th century, many mathematicians and other scientists have made contributions to the field of fractals and self-similarity including B.~Bolzano \cite{BOLZANO1930}, G.~Peano \cite{PEANO1890}, and D.~Hilbert \cite{HILBERT1891}. Additionally, the ideas of fractal geometry are also used in practical applications such as antenna design \cite{WERNER1999}, image compression \cite{BARNSLEY1993}, and computer graphics \cite{ENCARNACAO1992}. 

In this paper, we will consider fractals arising from iteratively applying mathematical functions to certain sets. Furthermore, we will include the concept of \emph{tensor decompositions} in this work. Tensors are multidimensional generalizations of vectors and matrices represented by arrays with several indices. By using so-called \emph{tensor products}, high-dimensional tensors can be decomposed into networks consisting of several smaller tensors. Over the last years, the interest in tensor decompositions has been growing rapidly since tensor-product approaches can be applied to many different application areas in order to mitigate the curse of dimensionality, see e.g.~\cite{KAZEEV2014, DOLGOV2015, GELSS2017}. Several tensor formats such as the \emph{canonical format} \cite{HITCHCOCK1927}, the \emph{Tucker format} \cite{TUCKER1963,TUCKER1964}, and the \emph{tensor-train format} \cite{OSELEDETS2009,OSELEDETS2009b,OSELEDETS2011} have been introduced and it was shown that various high-dimensional systems can be treated directly in those tensor formats. In particular, the tensor-train format may also be used for compact and storage-efficient representations of large data sets by capturing self-similar structures in the data, see \cite{LARCHER2017}. In this study, we will extend the ideas presented in \cite{XUE1996, HARDY2008, LESKOVEC2010} and use tensor decompositions for the opposite direction, i.e.~for generating self-similar patterns in different dimensions. We will show that various well-known geometric fractals can be created using tensor products and generalized Kronecker products.

The paper is organized as follows: In Section \ref{sec:Fractal Geometry}, we give a brief overview of fractal geometry including basic definitions and some prominent examples. Furthermore, we describe the fractal construction by iterated function systems. In Section \ref{sec:Tensor Decompositions}, we describe the concept of tensors and tensor decompositions, focusing on the tensor-train format. Examples of tensor-generated fractals will be shown in Section \ref{sec:Tensor-Generated Fractals}. In Section \ref{sec:Conclusion}, we will conclude with a short summary and an outlook on future work.

\section{Fractal geometry}
\label{sec:Fractal Geometry}

In mathematics, the dimension of an object essentially identifies the number of degrees of freedom of the elements belonging to that object. Intuitively, the dimension is the minimum number of coordinates required to specify any point of the object. However, as we will explain in this section, there exist different concepts of dimensions specifically defined for different contexts. The intuitive concept of a dimension is given by the \emph{topological dimension}.

\begin{definition}
 The \emph{topological dimension} $D_T$, also called \emph{Lebesgue covering dimension}, of a compact metric space $X \subset \mathbb{R}^d$ is the smallest number $n \in \mathbb{N}$ such that there exists an open cover $(U_i)_{i \in I}$ with $\textrm{diam}(U_i) < \varepsilon$ for all $i \in I$ and for any given $\varepsilon > 0$, where each point $x \in X$ belongs to at most $n+1$ sets in the cover.
\end{definition}
For instance, the topological dimension of a curve (including graphs) is 1, whereas the topological dimension of a surface is 2. In general, a $d$-dimensional Euclidean space has topological dimension $D_T=d$. 

The \emph{fractal dimension}, as introduced by Mandelbrot in 1967, is a special case of the \emph{Hausdorff--Besicovitch dimension} which was already introduced in 1918 by F.~Hausdorff, see \cite{HAUSDORFF1918}. In general, the fractal dimension of a compact metric space $X$ is given by the so-called \emph{box-counting dimension}, cf.~\cite{LIEBOVITCH1989,SARKAR1994}.
\begin{definition}\label{def: fractal dimension}
  The \emph{fractal dimension} $D_F$ of a compact metric space $X \subset \mathbb{R}^d$ is defined as
  \begin{equation}\label{eq: fractal dimension}
    D_F = - \lim_{r \rightarrow 0} \frac{\ln(N(r))}{\ln(r)},
  \end{equation}
  where $N(r)$ is number of ($d$-dimensional) boxes of side length $r$ required to cover the space $X$.
\end{definition}

In order to understand the Definition \ref{def: fractal dimension}, we consider simple geometric objects such as lines, squares or cubes. We scale these objects by scaling factors $r<1$ and denote by $N(r)$ the number of subobjects, which then fit into the given object, see Figure \ref{fig: simple objects}. One can see that we obtain the same relation as defined in \eqref{eq: fractal dimension}, i.e.
\begin{equation}\label{eq: simple dimension}
  N(r) = r^{-D} \quad \Leftrightarrow \quad D = -\frac{\ln(N(r))}{\ln(r)}.
\end{equation}
The only difference between \eqref{eq: fractal dimension} and \eqref{eq: simple dimension} is that we do not take the limit as $r \rightarrow 0$ in \eqref{eq: simple dimension} since the result is the same for all scaling factors. Thus, we see that $D_F$ can indeed be interpreted as a generalized form of the intuitive concept of a dimension. 

\begin{figure}[htb]
    \centering
    \begin{subfigure}[b]{0.3\textwidth}
        \centering
        \includegraphics[width=80px]{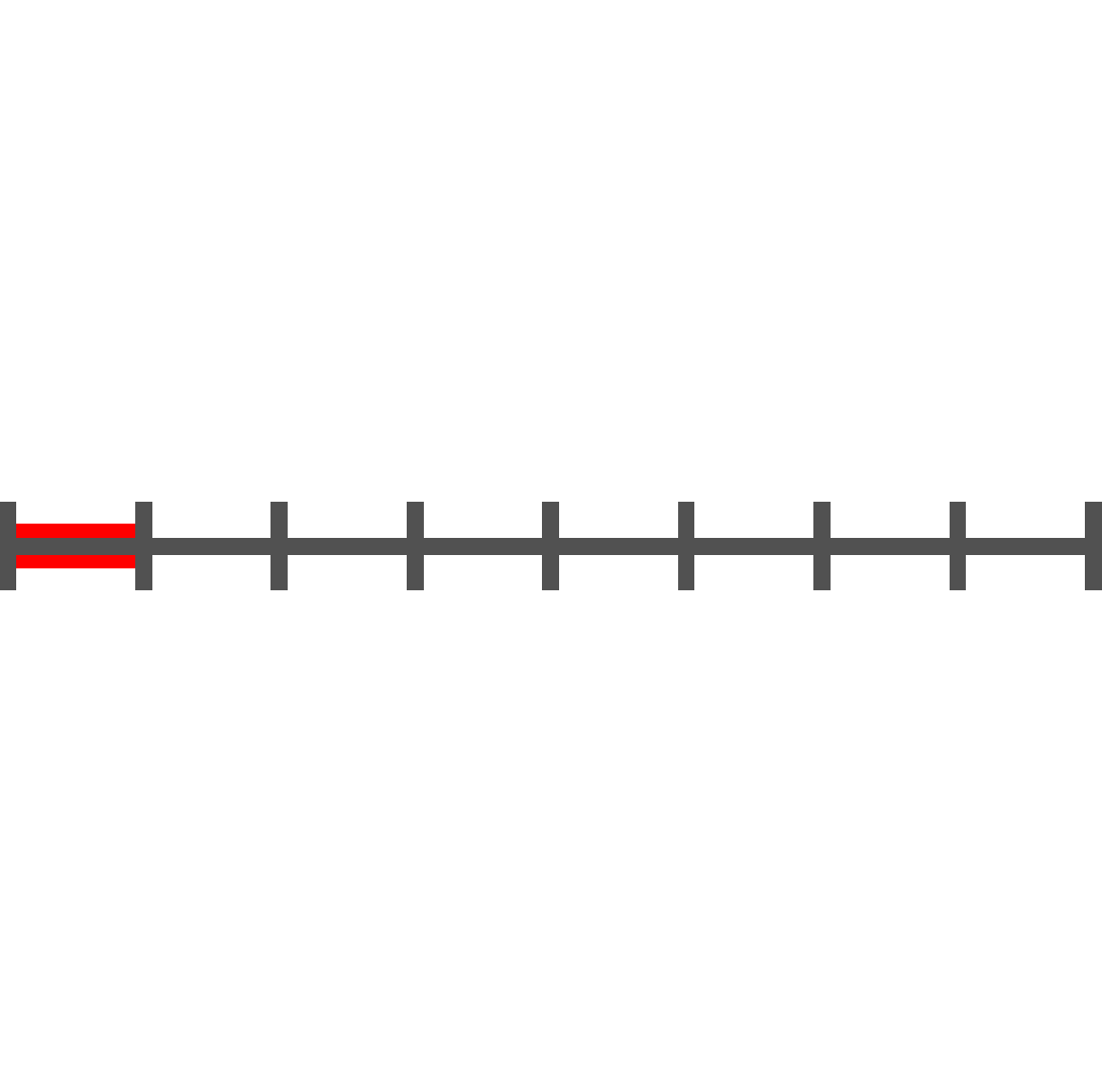}
        \caption{}
    \end{subfigure}
    \hfill
    \begin{subfigure}[b]{0.3\textwidth}
        \centering
        \includegraphics[width=80px]{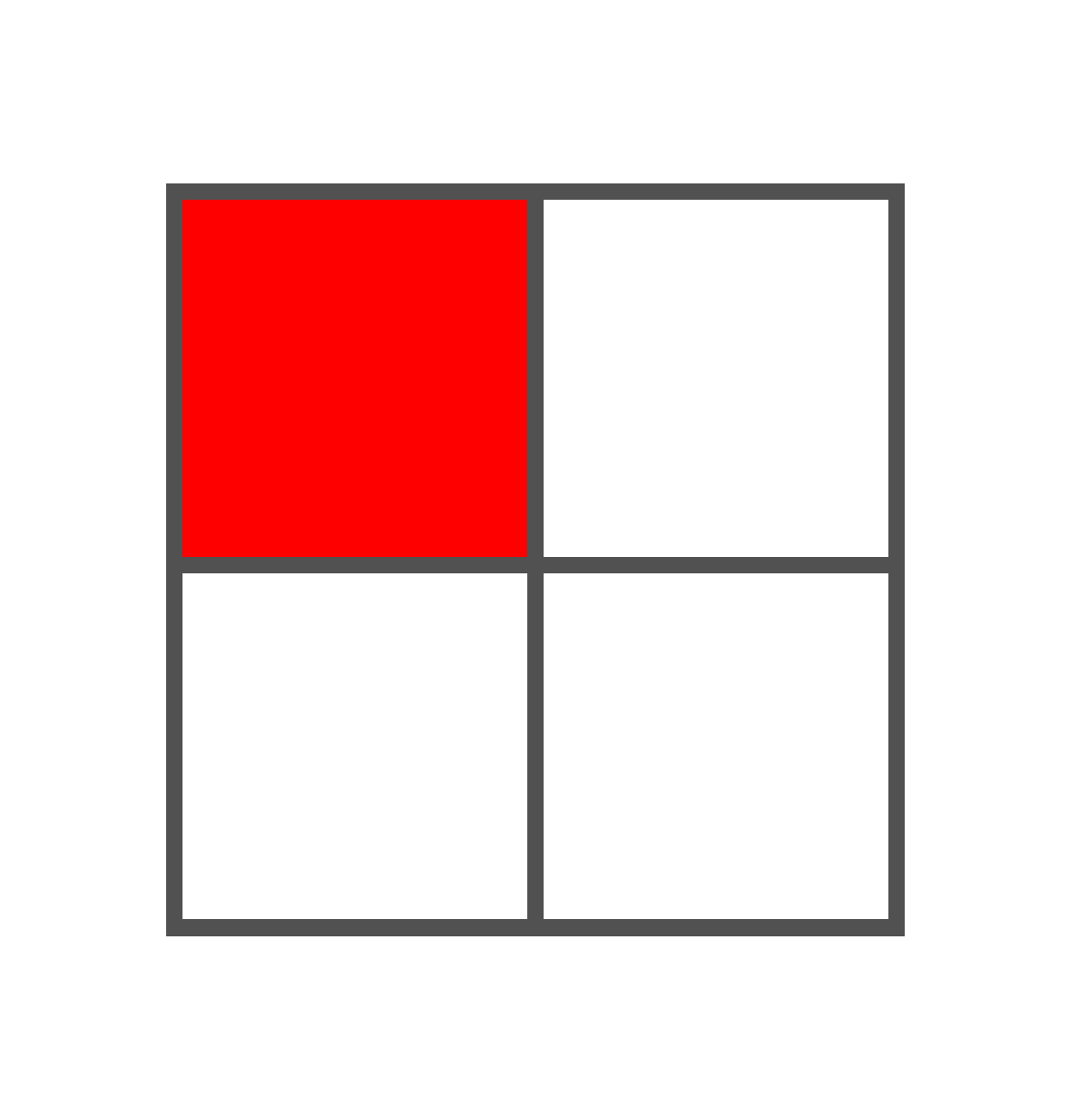}
        \caption{}
    \end{subfigure}
    \hfill
    \begin{subfigure}[b]{0.3\textwidth}
        \centering
        \includegraphics[width=80px]{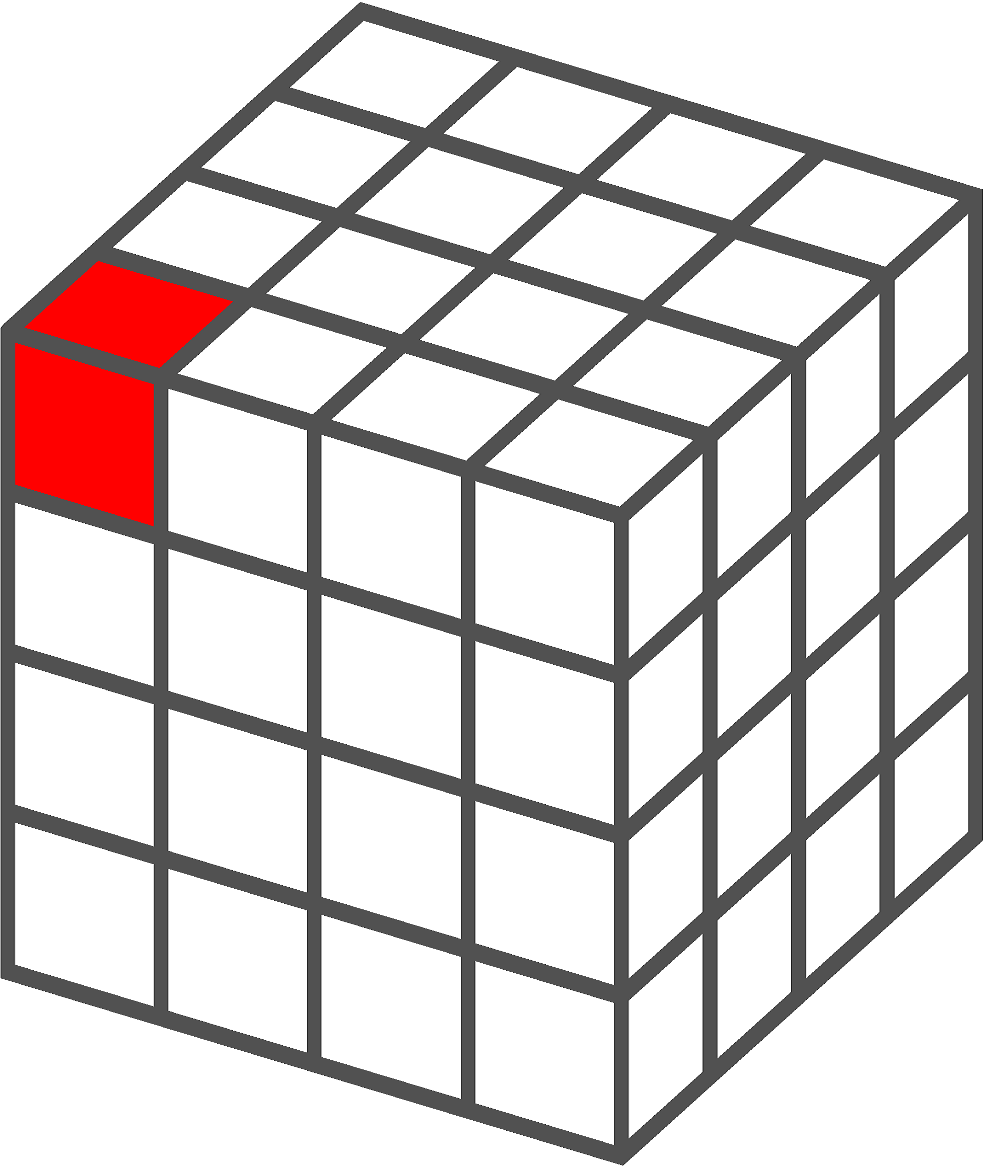}
        \caption{}
    \end{subfigure}
    \caption{Scaling of simple geometric objects: (a) Line segment ($D_T = D_F  = 1$) scaled by a factor of $1/8$. (b) Square ($D_T = D_F = 2$) scaled by a factor of $1/2$. (c) Cube ($D_T=D_F=3$) scaled by a factor of $1/4$. The respective dimensions satisfy \eqref{eq: simple dimension}.}
    \label{fig: simple objects}
\end{figure}

The fundamental definition of a fractal was stated by Mandelbrot in \cite{MANDELBROT1975} by comparing the topological dimension and the Hausdorff--Besicovitch dimension of a given object.
\begin{definition}
 A \emph{fractal} is a compact metric space $X \subset \mathbb{R}^d$ whose fractal dimension strictly exceeds its topological dimension, i.e.~$D_F > D_T$.
\end{definition}

Prominent examples of fractal structures which arise from simple mathematical equations are Julia sets and the Mandelbrot set, see \cite{MANDELBROT1982}. In general, Julia sets are non-empty, uncountable, compact, and perfect, i.e.~they have no isolated points. The boundary of the Mandelbrot set is a fractal curve with fractal dimension 2. That is, in terms of fractal geometry, it is a curve in a two-dimensional space which is as complicated as it can be. Despite being a one-dimensional object, it behaves like a surface.

In this paper, we focus on strictly geometric fractals. A simple and early example is the Cantor set, see Figure \ref{fig: Cantor}, which was introduced by G.~Cantor in 1883 \cite{CANTOR1883}. It can be constructed by using a finite subdivision rule, i.e.~starting with the unit interval $\left[0,1\right]$, the open middle third of every line segment is removed in each iteration step. The Cantor set is then defined as the limit set obtained by repeating the subdivision infinitely many times. Just like the Julia sets, the Cantor set is non-empty, uncountable, compact, and perfect. Furthermore, it is nowhere dense and totally disconnected. Since the Cantor set is a null set, its topological dimension is $0$. However, its fractal dimension can be calculated by
\begin{equation*}
  D_F = - \lim_{r \rightarrow 0}\frac{\ln(N(r))}{\ln(r)} = - \lim_{k \rightarrow \infty}\frac{\ln(2^k)}{\ln\left(\left(\frac{1}{3}\right)^k\right)} = \frac{\ln(2)}{\ln(3)} \approx 0.6309,
\end{equation*}
since the line segments are scaled by a factor of $1/3$ at each iteration step and their number is doubled, cf.~Figure~\ref{fig: Cantor}.

\begin{figure}[htb]
    \centering
    \begin{subfigure}[b]{0.9\textwidth}
        \centering
        \raisebox{-1.5px}{\makebox[1cm][c]{\small\textbf{(a)}}}
        \includegraphics[width=300px]{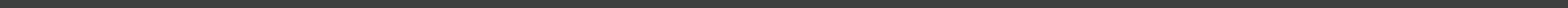}
        \caption*{}
    \end{subfigure}\\
    \begin{subfigure}[b]{0.9\textwidth}
        \centering
        \raisebox{-1.5px}{\makebox[1cm][c]{\small\textbf{(b)}}}
        \includegraphics[width=300px]{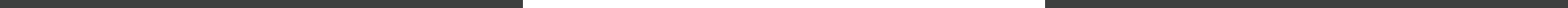}
        \caption*{}
    \end{subfigure}\\
    \begin{subfigure}[b]{0.9\textwidth}
        \centering
        \raisebox{-1.5px}{\makebox[1cm][c]{\small\textbf{(c)}}}
        \includegraphics[width=300px]{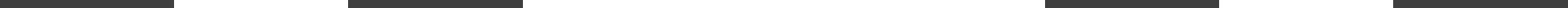}
        \caption*{}
    \end{subfigure}\\
    \begin{subfigure}[b]{0.9\textwidth}
        \centering
        \raisebox{-1.5px}{\makebox[1cm][c]{\small\textbf{(d)}}}
        \includegraphics[width=300px]{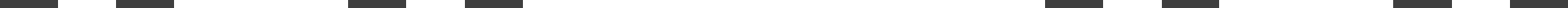}
        \caption*{}
    \end{subfigure}
    \caption{Construction of the Cantor set: The first four iteration steps of the fractal construction are shown. In each iteration step, the open middle third of every line segment is removed.}
    \label{fig: Cantor}
\end{figure}

\subsection{Iterated function systems}
\label{sec:Iterated Function Systems}

Fractal patterns can also be constructed by using iterated function systems (IFS), which are finite sets of contractive functions $f_i : X \rightarrow X$, $i = 1, \dots , m$, on a complete metric space. That is, there exists a real number $\kappa_i$ for any $i \in \{1, \dots , m\}$ with $0 < \kappa_i < 1$ such that 
 \begin{equation*}
  \Delta(f_i(x), f_i(y)) \leq \kappa_i \cdot \Delta (x,y),
\end{equation*}
for all $x,y \in X$. Here, $\Delta(\cdot,\cdot)$ denotes the metric on $X \subset \mathbb{R}^d$. For the fractal construction by using an IFS, we set $X$ to $\left[ 0,1 \right]^{\times d}$ with $1 \leq d \leq 3$. That is, the space $X$ is either the unit interval, the unit square, or the unit cube. 
\begin{definition}
  Let $Y \subseteq X$ be a subset of the metric space $X$. The \emph{Hutchinson operator} corresponding to the IFS $\{f_1 , \dots , f_m\}$ is given by
  \begin{equation*}
    H(Y) = \bigcup_{i = 1}^m f_i (Y).
  \end{equation*}
  The fixed set of the Hutchinson operator, i.e.~$Z \subseteq X$ with $H(Z) = Z$, is called the \emph{IFS attractor}.
\end{definition}

As a consequence of the Banach fixed-point theorem, the IFS attractor $Z$ is uniquely defined for any given (countable) IFS, see \cite{SECELEAN2012}. Furthermore, it can be represented as a limit set of the iterative application of $H$ to an arbitrary compact subset $Y \subseteq X$ with $Y \neq \varnothing$, i.e.
 \begin{equation*}
  Z = \lim_{k \rightarrow \infty} H^k(Y).
\end{equation*}

IFS attractors often exhibit self-similarity over an infinite range of scales. The Cantor set, which was already mentioned in the previous section, see Figure \ref{fig: Cantor}, can also be constructed by using an IFS instead of a finite subdivision rule. The corresponding set of functions is given by
\begin{equation*}
  \left\lbrace  f_1 (x)  =  \frac{1}{3} x,~  f_2(x) = \frac{1}{3} x + \frac{2}{3}  \right\rbrace,
\end{equation*}
with $x \in \left[0,1\right]$. The functions iteratively scale and shift a given initial subset of $\left[ 0,1\right]$. If we, for the sake of simplicity, start with the whole unit interval as the initial set, we get exactly the same construction steps as shown in Figure \ref{fig: Cantor}. 

\begin{figure}[htb]
    \centering
    \begin{subfigure}[b]{0.24\textwidth}
        \centering
        \includegraphics[width=80px]{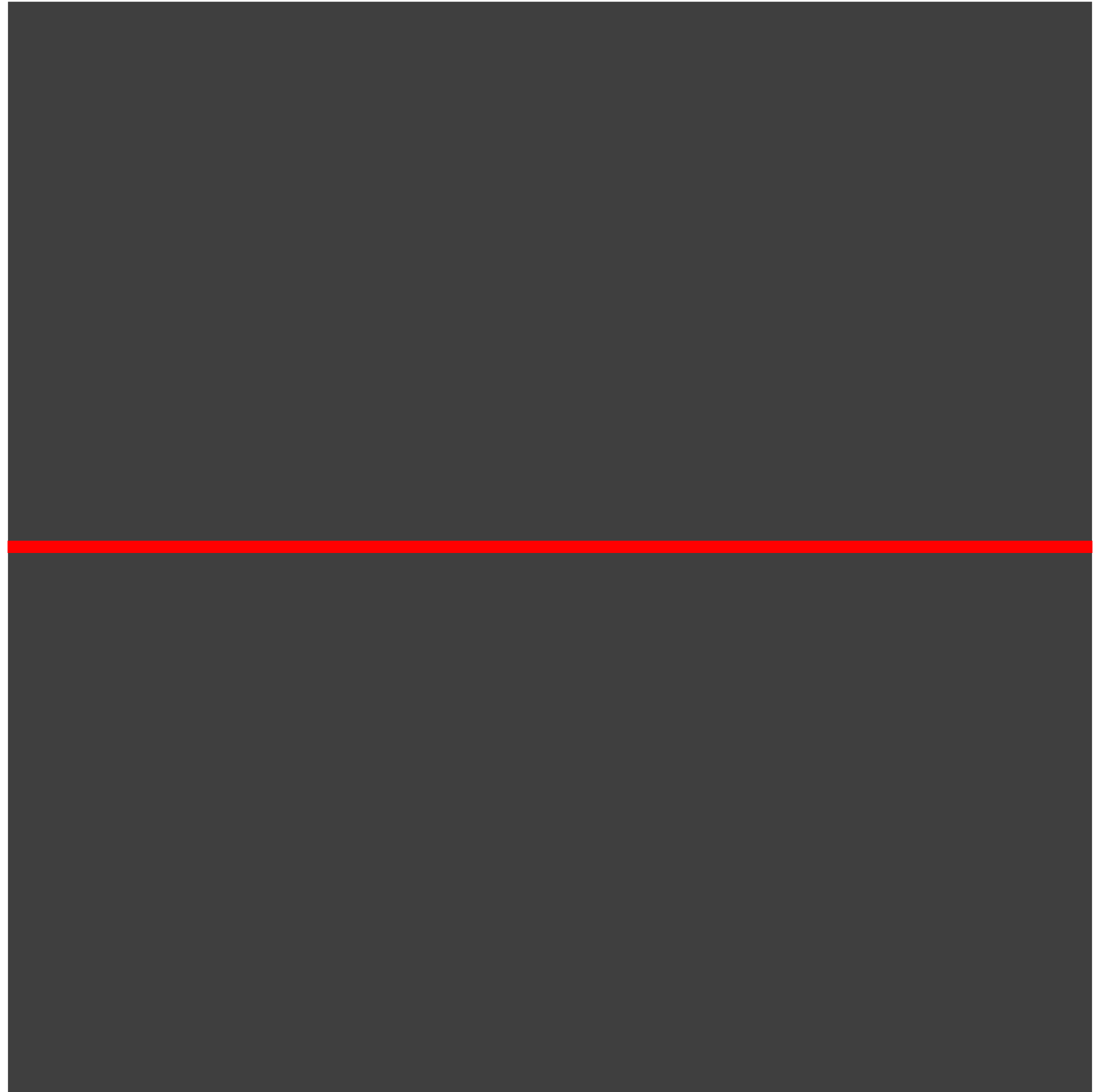}
        \caption{}
    \end{subfigure}
    \hfill
    \begin{subfigure}[b]{0.24\textwidth}
        \centering
        \includegraphics[width=80px]{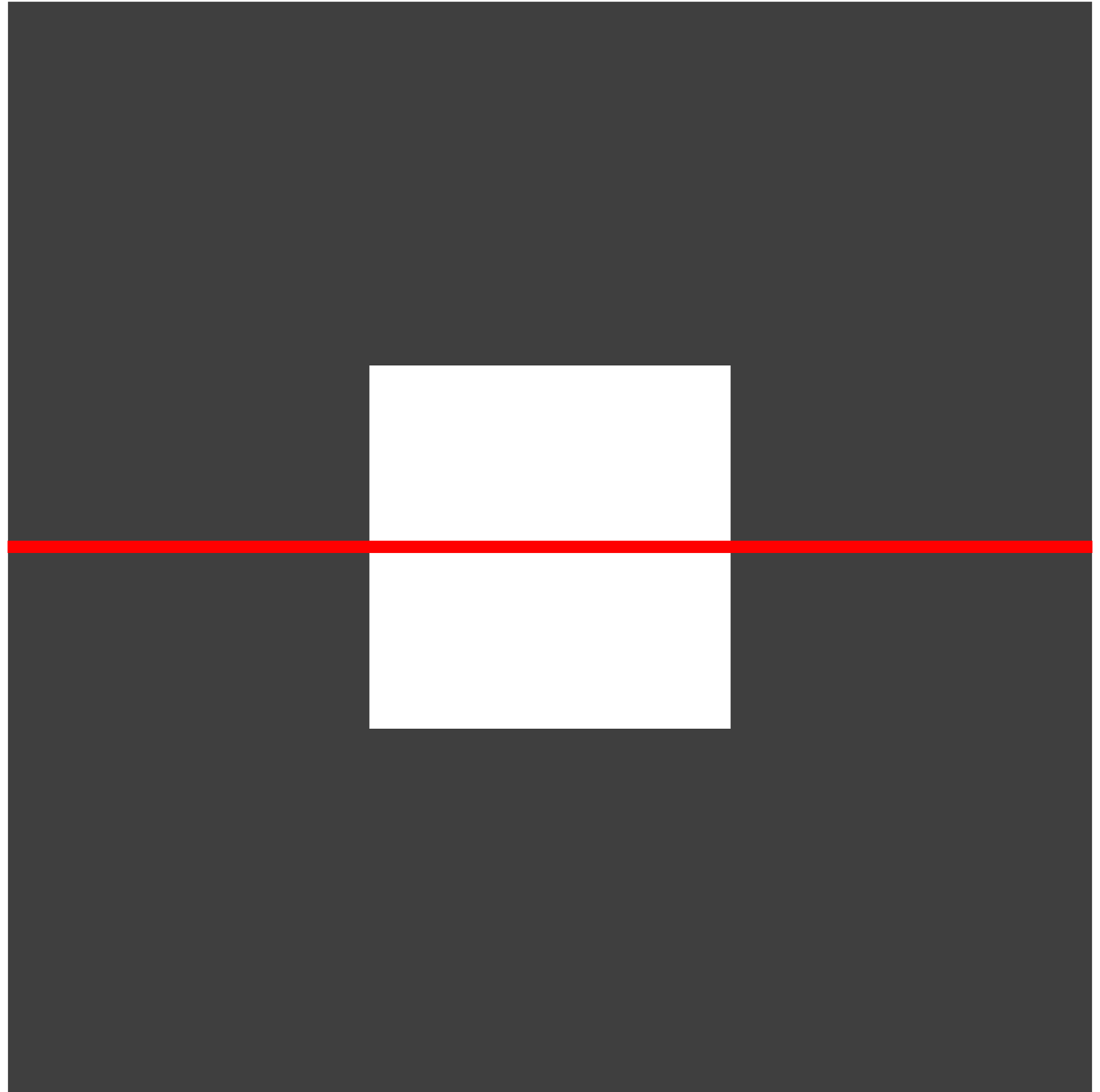}
        \caption{}
    \end{subfigure}
    \hfill
    \begin{subfigure}[b]{0.24\textwidth}
        \centering
        \includegraphics[width=80px]{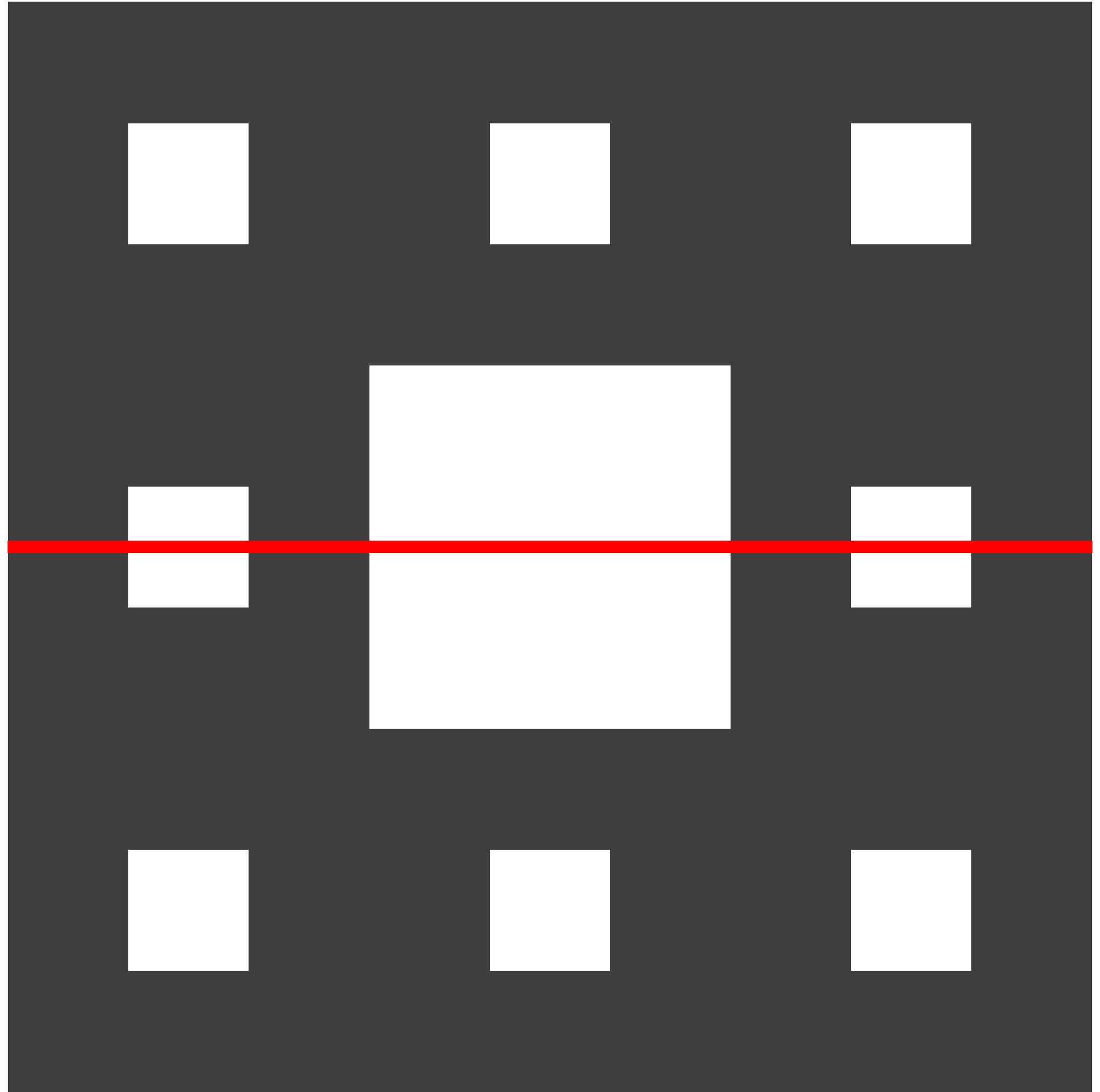}
        \caption{}
    \end{subfigure}
    \hfill
    \begin{subfigure}[b]{0.24\textwidth}
        \centering
        \includegraphics[width=80px]{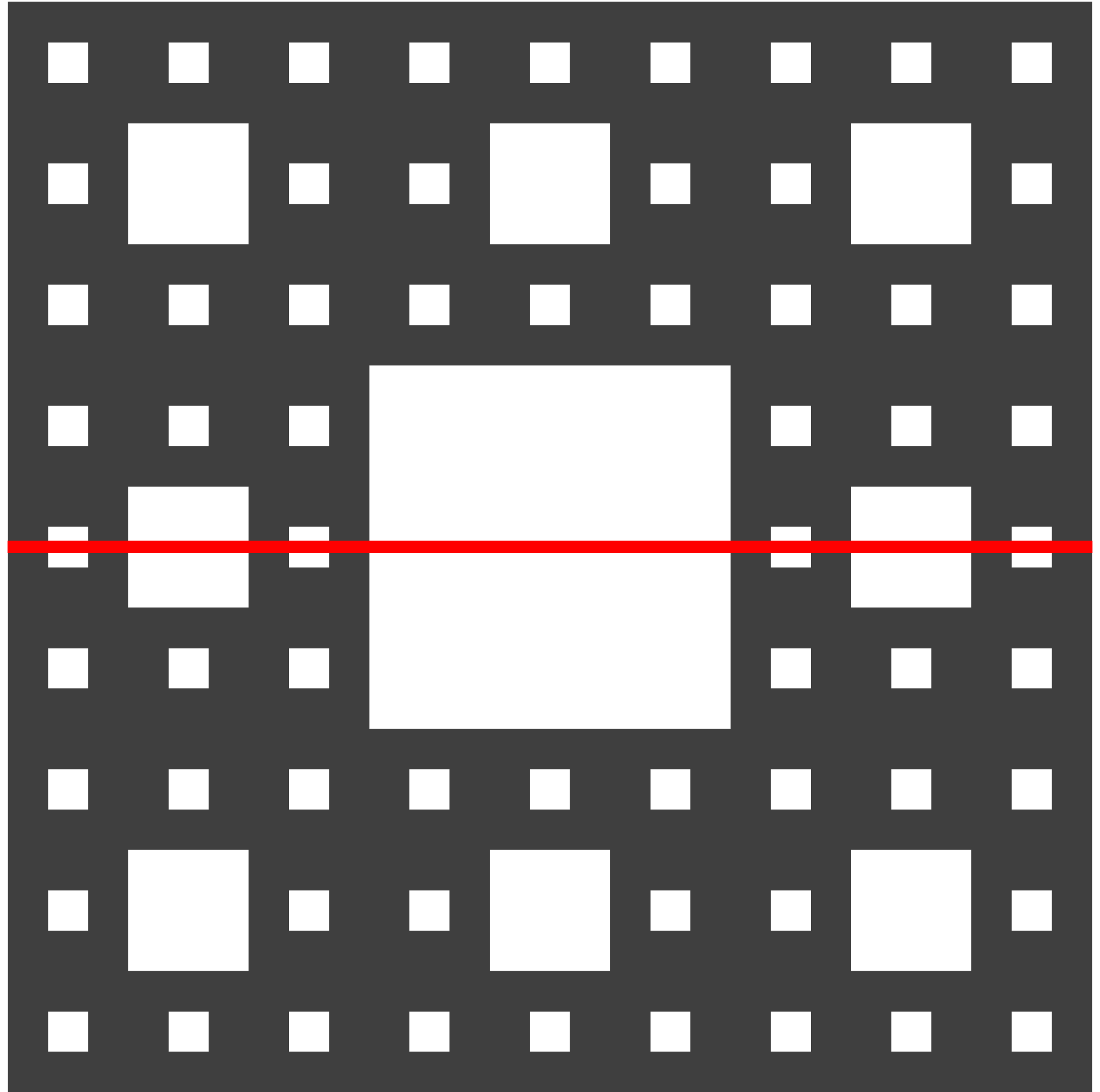}
        \caption{}
    \end{subfigure}
    \caption{Construction of the Sierpinski carpet: The first four iteration steps of the fractal construction are shown. In each iteration step, the open central part of every subsquare is removed. The intersections with the red lines correspond to the construction steps of the Cantor set shown in Figure \ref{fig: Cantor}.}
    \label{fig: sierpinski}
\end{figure}

Two other famous IFS fractals we will focus on in this paper are the Sierpinski carpet, introduced by W. Sierpi{\'n}ski in 1916 \cite{SIERPINSKI1916}, and the Menger sponge, introduced by K. Menger in 1926 \cite{MENGER1916}. Both can be seen as higher-dimensional generalizations of the Cantor set. That is, the Sierpinski carpet is a two-dimensional counterpart of the Cantor set. The corresponding IFS can be found in Appendix \ref{app: IFS - sierpinski}. Figure \ref{fig: sierpinski} shows the first four iteration steps of the construction of the Sierpinski carpet and its connection to the Cantor set. The Menger sponge is a three-dimensional generalization of the Sierpinski carpet and the Cantor set, respectively. See Figure \ref{fig: menger} for an illustration of the construction and Appendix \ref{app: IFS - menger} for the corresponding IFS. Each face of the Menger sponge is a Sierpinski carpet and the intersection of the sponge with any midline of the faces is a Cantor set. The Sierpinski carpet as well as the Menger sponge have in fact topological dimension of 1, meaning both are curves in two and three dimensions, respectively. The fractal dimension of the Sierpinski carpet is $D_F = \ln(8) / \ln(3) \approx 1.8928$ and the fractal dimension of the Menger sponge is $D_F = \ln(20) / \ln(3) \approx 2.7268$. These calculations become clear by considering the number of residual subsquares/subcubes in each iteration step. In both cases the subsquares/subcubes are scaled by a factor of $1/3$, determining the denominators in \eqref{eq: fractal dimension}. 

\begin{figure}[htb]
    \centering
    \begin{subfigure}[b]{0.24\textwidth}
        \centering
        \includegraphics[width=80px]{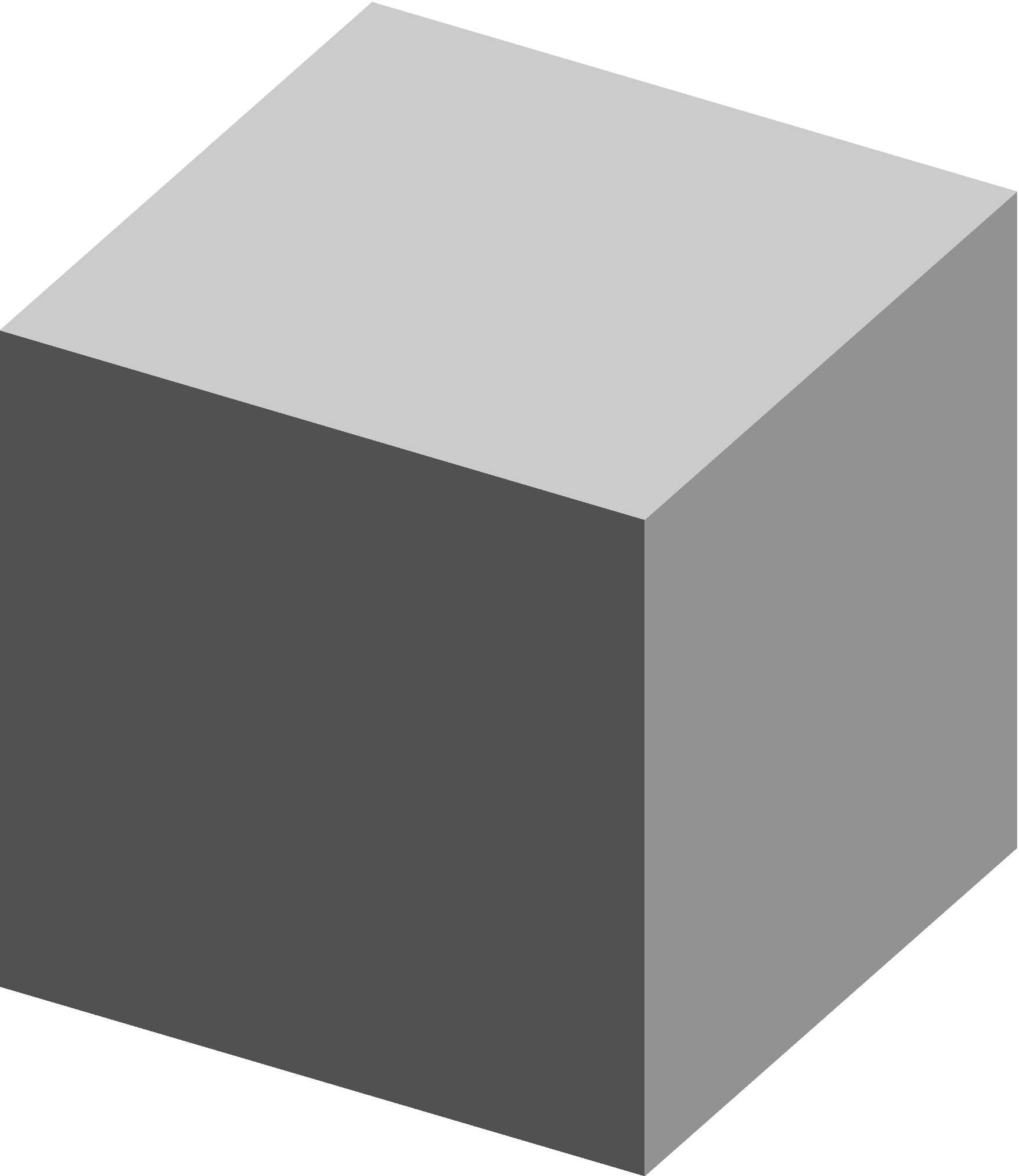}
        \caption{}
    \end{subfigure}
    \hfill
    \begin{subfigure}[b]{0.24\textwidth}
        \centering
        \includegraphics[width=80px]{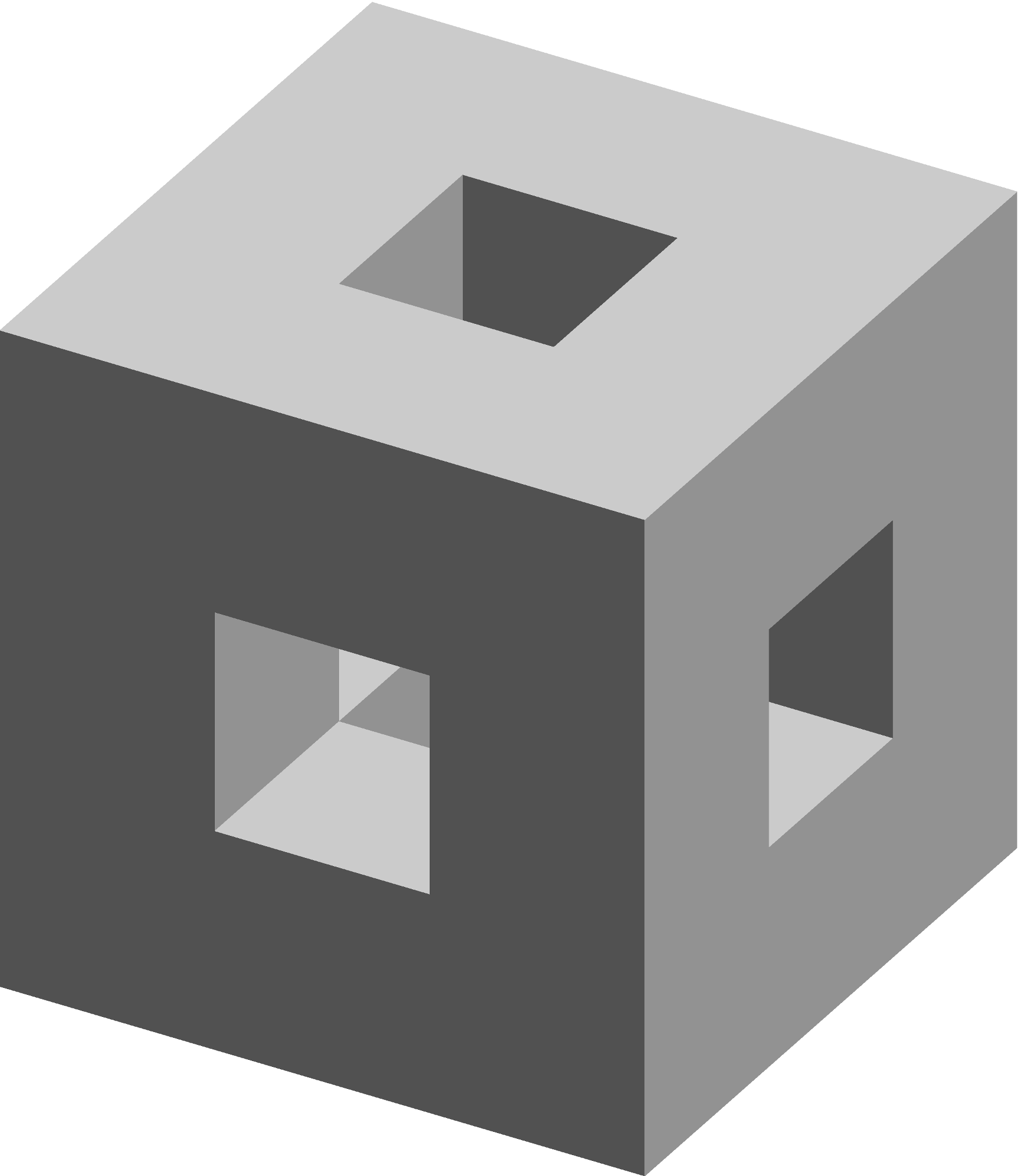}
        \caption{}
    \end{subfigure}
    \hfill
    \begin{subfigure}[b]{0.24\textwidth}
        \centering
        \includegraphics[width=80px]{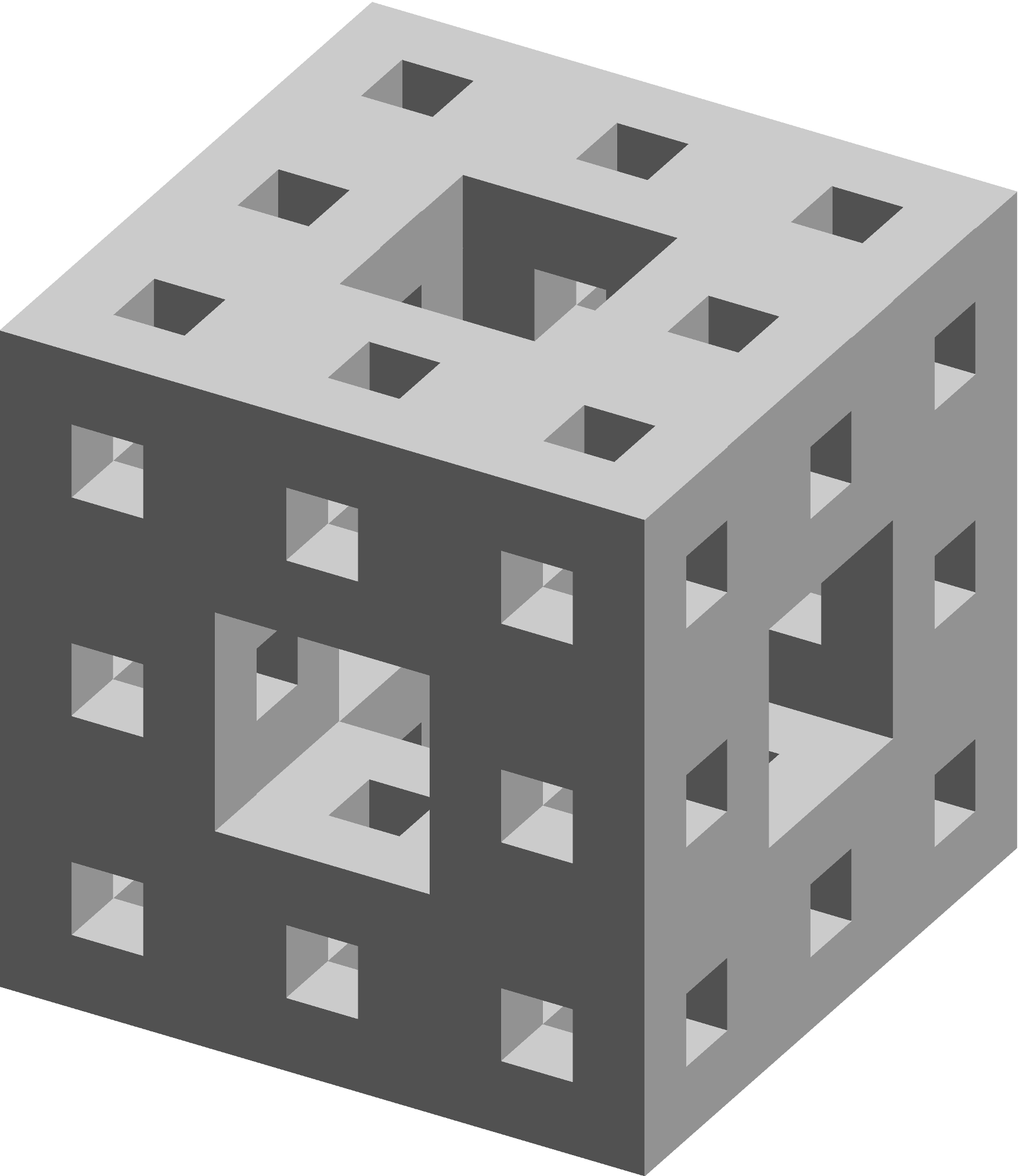}
        \caption{}
    \end{subfigure}
    \hfill
    \begin{subfigure}[b]{0.24\textwidth}
        \centering
        \includegraphics[width=80px]{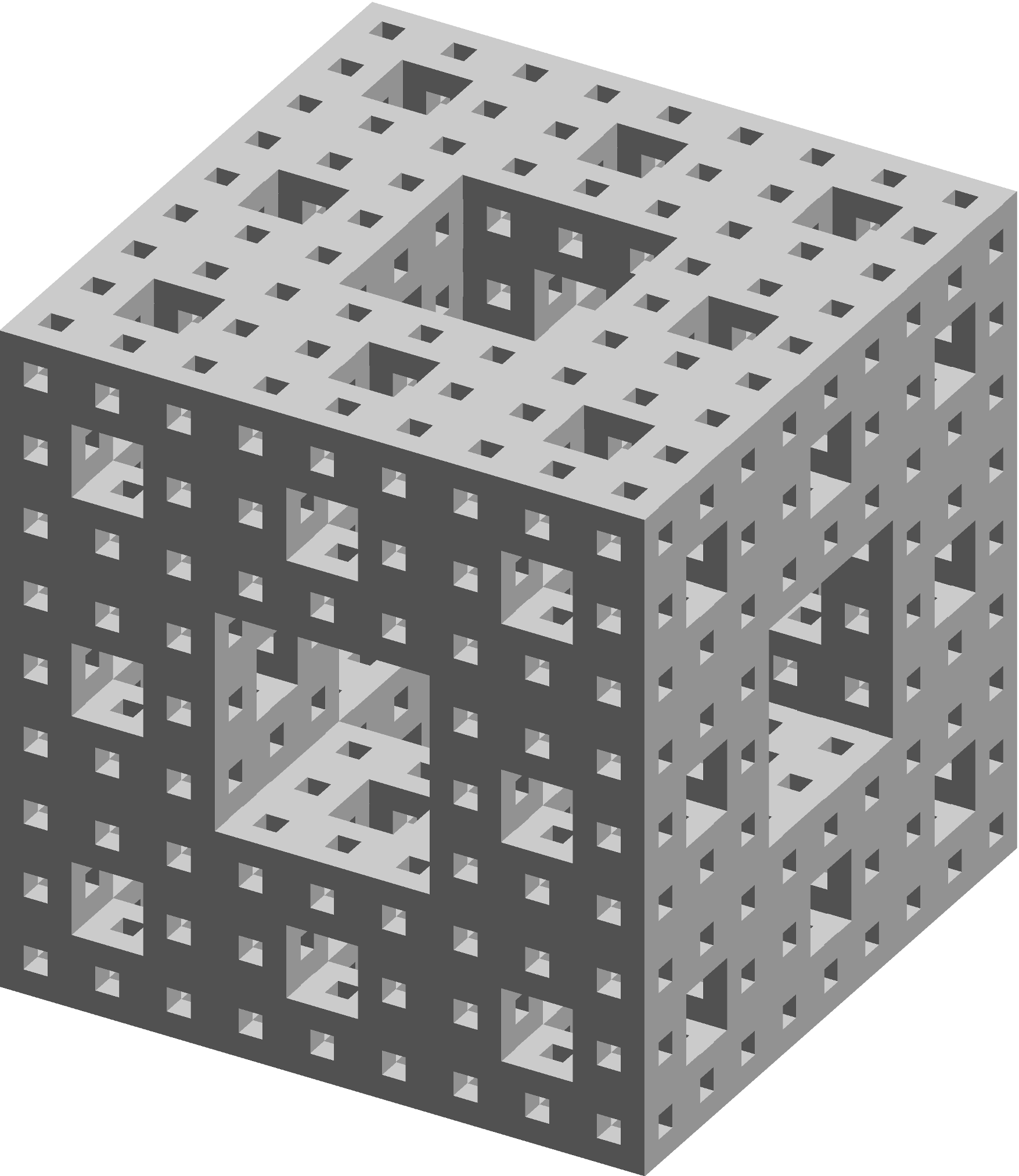}
        \caption{}
    \end{subfigure}
    \caption{Construction of the Menger sponge: The first four iteration steps of the fractal construction are shown. In each iteration step, every subcube is divided into 27 smaller cubes. Subsequently, the central smaller cubes in the middle of each face and in the center of the subcube are then removed.}
    \label{fig: menger}
\end{figure}

\section{Tensor decompositions}
\label{sec:Tensor Decompositions}

A tensor is a multidimensional array $\mathbf{T} \in \mathbb{R}^{n_1 \times \dots \times n_d}$ whose entries are indexed by $ \mathbf{T}_{x_1, \dots, x_d} $. The number $d$ is called the \emph{order} of the tensor $\mathbf{T}$. If we fix certain indices, colons are used to indicate the free modes (cf.~\textsc{Matlab} colon notation), e.g.~for a tensor $\mathbf{T} \in \mathbb{R}^{n_1  \times \dots \times  n_d}$, we obtain 
\begin{equation*}
\mathbf{T} _{x_1 , : , x_3 , : , x_5, \ldots , x_d} \in \mathbb{R}^{n_2 \times n_4} \quad \textrm{and} \quad \mathbf{T} _{x_1 , : , \ldots , : , x_d} \in \mathbb{R}^{n_2 \times  \dots \times n_{d-1}}.
\end{equation*}

A tensor of order 1 is a vector, a tensor of order 2 is a matrix, and a tensor of order 3 can be imagined as layers of matrices. In order to visualize tensors $\mathbf{T} \in \mathbb{R}^{n_1 \times n_2 \times 3}$, we list the single layers of the tensor in a row, i.e.

\begin{center}
\begin{tikzpicture}
\node[] at (-1.15,0.53) {$\mathbf{T}~=$};
\def\da{0.38}
\def\db{0.475}
\node (1) [draw=lightgray, rectangle, fill=white] at (2*\da,2*\db) {$\mathbf{T}_{\textrm{:},\textrm{:},3}$};
\node (2) [draw=lightgray, rectangle, fill=white] at (1*\da,1*\db) {$\mathbf{T}_{\textrm{:},\textrm{:},2}$};
\node (3) [draw=lightgray, rectangle, fill=white] at (0*\da,0*\db) {$\mathbf{T}_{\textrm{:},\textrm{:},1}$};
\draw[dotted] (1.north east)--(3.north east);
\draw[dotted] (1.north west)--(3.north west);
\draw[dotted] (1.south east)--(3.south east);
\node[align = left, anchor = west] at (1.5,0.53) {$=~\left( \, \mathbf{T}_{\textrm{:},\textrm{:},1} \,|\, \mathbf{T}_{\textrm{:},\textrm{:},2} \,|\, \mathbf{T}_{\textrm{:},\textrm{:},3} \,\right),$};
\end{tikzpicture}
\end{center}
with $\mathbf{T}_{:,:,i} \in \mathbb{R}^{n_1 \times n_2}$ for $i=1,2,3$.

With the aim to mitigate the curse of dimensionality, i.e.~the exponential growth of the memory consumption of a tensor, various tensor formats have been proposed over the last years. The common basis of these formats is the tensor product. 

\begin{definition} \label{def:outer product}
    The \emph{tensor product} of two tensors $\mathbf{T} \in \mathbb{R}^{m_1 \times \dots \times m_d}$ and $\mathbf{U} \in \mathbb{R}^{n_1 \times \dots \times n_e}$ defines a tensor $\mathbf{T} \otimes \mathbf{U} \in \mathbb{R}^{(m_1 \times \dots \times m_d) \times (n_1 \times \dots \times n_e)}$ with
    \begin{equation*}
        \left(\mathbf{T} \otimes \mathbf{U} \right)_{x_1, \dots, x_d, y_1, \dots, y_e } = \mathbf{T}_{x_1, \dots, x_d} \cdot \mathbf{U}_{y_1, \dots, y_e},
    \end{equation*}
    where $1 \leq x_k \leq m_k$ for $k = 1, \dots, d$ and $1 \leq y_k \leq n_k$ for $k = 1, \dots, e$.
\end{definition}

The tensor product enables the decomposition of high-dimensional tensors into several smaller tensors. It is a bilinear map, meaning that, if we fix one of the tensors, we obtain a linear map on the space where the other tensor lives. In 1927, F.~Hitchcock presented the idea of expressing a tensor as the sum of a finite number of so-called \emph{rank-one tensors} \cite{HITCHCOCK1927}, i.e.
\begin{equation}\label{eq: rank-one tensor}
  \mathbf{T} = \sum_{k = 1}^{r} \left( \mathbf{T}^{(1)} \right)_{k,:} \otimes \dots \otimes \left( \mathbf{T}^{(d)} \right)_{k,:},
\end{equation}
with \emph{cores} $\mathbf{T}^{(i)} \in \mathbb{R}^{r \times n_i}$ for $i=1, \dots, d$. The parameter $r$ is called the \emph{rank} of the decomposition. The above format is the initial concept of tensor decompositions. In fact, any tensor can be represented by a linear combination of tensor products of vectors as in \eqref{eq: rank-one tensor}. However, the number of required rank-one tensors plays an important role. For more information about canonical tensors, we refer to~\cite{KOLDA2009}.

Another fundamental operation is the \emph{Kronecker product}, which has a close relation to the tensor product in terms of vectorizations and matricizations, see e.g.~\cite{COHEN2015b,CICHOCKI2016}. Usually, the Kronecker product is only applied to vectors and matrices. However, we generalize this operation to tensors with arbitrary order.

\begin{definition} \label{def:Kronecker product}
    The \emph{generalized Kronecker product} of two tensors $\mathbf{T} \in \mathbb{R}^{m_1 \times \dots \times m_d}$ and $\mathbf{U} \in \mathbb{R}^{n_1 \times \dots \times n_d}$ defines a tensor $\mathbf{T} \boxtimes \mathbf{U} \in \mathbb{R}^{(m_1 \cdot n_1) \times \dots \times (m_d \cdot n_d)}$ with
    \begin{equation*}
        \left(\mathbf{T} \boxtimes \mathbf{U} \right)_{n_1 (x_1 -1)+y_1, \dots, n_d (x_d -1)+y_d} = \mathbf{T}_{x_1, \dots, x_d} \cdot \mathbf{U}_{y_1, \dots, y_d},
    \end{equation*}
    where $1 \leq x_k \leq m_k$ and $1 \leq y_k \leq n_k$ for $k = 1, \dots, d$.
\end{definition}

Note that some of the dimensions $m_1, \dots , m_d$ and $n_1 , \dots , n_d$ in Definition \ref{def:Kronecker product} can also be equal to 1. Thus, the definition also includes the standard Kronecker products of vectors and matrices.

\begin{remark}
 The Kronecker product is often denoted by the symbol $\otimes$. In order to avoid confusion, we use two different symbols for the tensor product ($\otimes$) and the Kronecker product ($\boxtimes$). 
\end{remark}

\begin{example}
 As simple examples for tensor and Kronecker products, consider the tensors $\mathbf{T}$ and $\mathbf{U}$ with
 \begin{equation*}
  \mathbf{T} = \begin{pmatrix}
                \mathbf{T}_{1,1} & \mathbf{T}_{1,2}\\
                \mathbf{T}_{2,1} & \mathbf{T}_{2,2}
               \end{pmatrix}
  \quad \textrm{and} \quad
  \mathbf{U} = \begin{pmatrix}
                \mathbf{U}_{1,1} \\
                \mathbf{U}_{2,1} \\
                \mathbf{U}_{3,1}
               \end{pmatrix}.
 \end{equation*}
 The respective products are then given by
 \begin{equation*}
  \mathbf{T} \otimes \mathbf{U} = \left(\begin{matrix}
				    \mathbf{T}_{1,1} \mathbf{U}_{1,1} & \mathbf{T}_{1,2} \mathbf{U}_{1,1}\\
				    \mathbf{T}_{2,1} \mathbf{U}_{1,1} & \mathbf{T}_{2,2} \mathbf{U}_{1,1}
				  \end{matrix} \,\middle|\,
				  \begin{matrix}
				    \mathbf{T}_{1,1} \mathbf{U}_{2,1} & \mathbf{T}_{1,2} \mathbf{U}_{2,1}\\
				    \mathbf{T}_{2,1} \mathbf{U}_{2,1} & \mathbf{T}_{2,2} \mathbf{U}_{2,1}
				  \end{matrix} \,\middle|\,
				  \begin{matrix}
				    \mathbf{T}_{1,1} \mathbf{U}_{3,1} & \mathbf{T}_{1,2} \mathbf{U}_{3,1}\\
				    \mathbf{T}_{2,1} \mathbf{U}_{3,1} & \mathbf{T}_{2,2} \mathbf{U}_{3,1}
				  \end{matrix}\right),
 \end{equation*}
 and
 \begin{equation*}
  \mathbf{T} \boxtimes \mathbf{U} = \begin{pmatrix}
                                     \mathbf{T}_{1,1} \mathbf{U}_{1,1} & \mathbf{T}_{1,2} \mathbf{U}_{1,1} \\
                                     \mathbf{T}_{1,1} \mathbf{U}_{2,1} & \mathbf{T}_{1,2} \mathbf{U}_{2,1} \\
                                     \mathbf{T}_{1,1} \mathbf{U}_{3,1} & \mathbf{T}_{1,2} \mathbf{U}_{3,1} \\
                                     \mathbf{T}_{2,1} \mathbf{U}_{1,1} & \mathbf{T}_{2,2} \mathbf{U}_{1,1} \\
                                     \mathbf{T}_{2,1} \mathbf{U}_{2,1} & \mathbf{T}_{2,2} \mathbf{U}_{2,1} \\
                                     \mathbf{T}_{2,1} \mathbf{U}_{3,1} & \mathbf{T}_{2,2} \mathbf{U}_{3,1} 
                                    \end{pmatrix}. \tag*{\exampleSymbol}
 \end{equation*}
\end{example}

\subsection{Tensor-train format}
\label{sec:Tensor-Train Format}

A frequently used tensor format is the so-called \emph{tensor-train format}, or short TT format, see \cite{OSELEDETS2011}. It is a special case of the \emph{hierarchical Tucker format} that has been investigated, e.g.~in \cite{ARNOLD2013,LUBICH2013}. The TT format is one of the most promising tensor formats in terms of storage consumption as well as computational robustness and has been successfully applied to many different application areas, e.g.~quantum physics \cite{WHITE1992,MEYER2009}, chemical reaction dynamics \cite{DOLGOV2015,GELSS2016}, stochastic queuing problems \cite{GELSS2017, KRESSNER2014}, machine learning \cite{NOVIKOV2015,COHEN2015}, and high-dimensional data analysis \cite{KLUS2016,KLUS2016b}. Since we are only interested in exploiting the TT format for the construction of fractal structures, we will disregard numerical aspects concerning memory consumption and the computational costs. For more information on the TT format, we refer to \cite{HACKBUSCH2012}.

\begin{definition}
A tensor $\mathbf{T} \in \mathbb{R}^{n_1 \times \dots \times n_d}$ is said to be in the \emph{TT format} if
\begin{equation*} 
    \mathbf{T} = \sum_{k_0=1}^{r_0} \cdots  \sum_{k_d=1}^{r_d} \left( \mathbf{T}^{(1)} \right)_{k_0,:,k_1} \otimes \dots \otimes \left( \mathbf{T}^{(d)} \right)_{k_{d-1},:,k_d},
\end{equation*}
where the $\mathbf{T}^{(i)} \in \mathbb{R}^{r_{i-1} \times n_i \times r_i}$, $i=1, \dots, d$, are called \emph{TT cores} and the numbers $r_i$ \emph{TT ranks} of the tensor. Here, $ r_0 = r_d = 1 $.
\end{definition}

For the sake of comprehensibility, we represent the TT cores as two-dimensional arrays containing vectors as elements, cf.~\cite{KAZEEV2012}. For a given tensor-train $\mathbf{T} \in \mathbb{R}^{ n_1 \times \dots \times n_d}$ with cores $\mathbf{T}^{(i)} \in \mathbb{R}^{r_{i-1} \times n_i \times r_i}$, $i = 1, \dots, d$, each core is written as
\begin{equation} \label{eq: core notation - single core}
    \left[ \mathbf{T}^{(i)} \right] =
    \begin{bmatrix}
        & \mathbf{T}^{(i)}_{1,:,1} & \cdots & \mathbf{T}^{(i)}_{1,:,r_i} & \\
        & & & & \\
        & \vdots & \ddots & \vdots & \\
        & & & & \\
        & \mathbf{T}^{(i)}_{r_{i-1},:,1} & \cdots & \mathbf{T}^{(i)}_{r_{i-1},:,r_i} &
    \end{bmatrix}.
\end{equation}
We then use the following notation for the TT decomposition of $\mathbf{T}$:
\begin{equation} \label{eq: core notation - all cores}
\begin{split}
    \mathbf{T} & = \left[ \mathbf{T}^{(1)}\right] \otimes \left[ \mathbf{T}^{(2)}\right] \otimes \dots \otimes \left[ \mathbf{T}^{(d-1)}\right] \otimes \left[ \mathbf{T}^{(d)}\right] \\[1.5ex]
    &= 
    \begin{bmatrix}
        \mathbf{T}^{(1)}_{1,:,1} & \cdots & \mathbf{T}^{(1)}_{1,:,r_1}
    \end{bmatrix}
    \otimes 
    \begin{bmatrix}
        \mathbf{T}^{(2)}_{1,:,1}   & \cdots & \mathbf{T}^{(2)}_{1,:,r_2}  \\
        \vdots                       & \ddots & \vdots                        \\
        \mathbf{T}^{(2)}_{r_1,:,1} & \cdots & \mathbf{T}^{(2)}_{r_1,:,r_2}
    \end{bmatrix}
    \otimes \cdots \\[1.5ex]
    & \qquad \cdots \otimes
    \begin{bmatrix}
        \mathbf{T}^{(d-1)}_{1,:,1}       & \cdots & \mathbf{T}^{(d-1)}_{1,:,r_{d-1}}      \\
        \vdots                             & \ddots & \vdots                                  \\
        \mathbf{T}^{(d-1)}_{r_{d-2},:,1} & \cdots & \mathbf{T}^{(d-1)}_{r_{d-2},:,r_{d-1}}
    \end{bmatrix}
    \otimes
    \begin{bmatrix}
        \mathbf{T}^{(d)}_{1,:,1}       \\
        \vdots                           \\
        \mathbf{T}^{(d)}_{r_{d-1},:,1}
    \end{bmatrix}.
    \end{split}
\end{equation}
This operation can be regarded as a generalization of the standard matrix multiplication, where the cores contain vectors as elements instead of scalar values. Just like multiplying two matrices, we compute the tensor products of the corresponding elements and then sum over the columns and rows, respectively. Below, we will use this notation to derive compact representations of fractal patterns in arbitrary dimensions.

\begin{example}
  Let us consider a simple example for a TT decomposition which is similar to the decompositions used later for the fractal construction. We define a tensor $\mathbf{T}$ of order 3 in the TT format as
  \begin{equation*} 
    \mathbf{T} =  \begin{bmatrix}
		    \begin{pmatrix} 1 \\ 0 \\ 1 \end{pmatrix} & \begin{pmatrix} 0 \\ 1 \\ 0 \end{pmatrix}
		  \end{bmatrix} \otimes
		  \begin{bmatrix}
		    \begin{pmatrix} 1 \\ 0 \\ 1 \end{pmatrix} & \begin{pmatrix} 0 \\ 0 \\ 0 \end{pmatrix} \\[0.5cm]
		    \begin{pmatrix} 0 \\ 0 \\ 0 \end{pmatrix} & \begin{pmatrix} 0 \\ 1 \\ 0 \end{pmatrix}
		  \end{bmatrix} \otimes
		  \begin{bmatrix}
		    \begin{pmatrix} 1 \\ 0 \\ 1 \end{pmatrix} \\[0.5cm]
		    \begin{pmatrix} 0 \\ 1 \\ 0 \end{pmatrix}
		  \end{bmatrix}.
  \end{equation*}
  By consecutively contracting the given TT cores, we then obtain
  \begin{equation*} 
    \mathbf{T} =  \begin{bmatrix}
		    \begin{pmatrix} 1 & 0 & 1 \\ 0 & 0 & 0 \\ 1 & 0 & 1 \end{pmatrix} & \begin{pmatrix} 0 & 0 & 0 \\ 0 & 1 & 0 \\ 0 & 0 & 0 \end{pmatrix}
		  \end{bmatrix} \otimes
		  \begin{bmatrix}
		    \begin{pmatrix} 1 \\ 0 \\ 1 \end{pmatrix} \\[0.5cm]
		    \begin{pmatrix} 0 \\ 1 \\ 0 \end{pmatrix}
		  \end{bmatrix} 
	       =  \left(\,\begin{matrix}
		   1 & 0 & 1 \\ 0 & 0 & 0 \\ 1 & 0 & 1
		  \end{matrix}\,\middle|\,
		  \begin{matrix}
		   0 & 0 & 0 \\ 0 & 1 & 0 \\ 0 & 0 & 0
		  \end{matrix}\,\middle|\,
		  \begin{matrix}
		    1 & 0 & 1 \\ 0 & 0 & 0 \\ 1 & 0 & 1
		  \end{matrix}\,\right). \tag*{\exampleSymbol}
  \end{equation*}
\end{example}

\section{Tensor-generated fractals}
\label{sec:Tensor-Generated Fractals}

In this section, we will describe the construction of fractal patterns in different dimensions by using tensor as well as Kronecker products. The idea to exploit the Kronecker product for creating two-dimensional fractals was already described in, e.g., \cite{XUE1996, HARDY2008, LESKOVEC2010}. Here, we will extend this idea in order to create self-similar structures also in higher dimensions. We express these structures by binary tensors $\mathbf{T}$ that only contain the values $0$ and $1$. The created patterns are then visualized by the non-zero entries, i.e.~every entry of $\mathbf{T}$ equal to $1$ is displayed by a line/square/cube whereas the entries equal to $0$ are not displayed. The patterns projected to $[0,1]^{\times d}$ then represent our compact metric spaces. 

Let us again consider the Cantor set. As we saw in the previous sections, it can be constructed by using a finite subdivision rule or an iterated function system. Additionally, this fractal can be represented in terms of Kronecker products. We can represent the iteration steps for the construction by Kronecker powers of the vector $( 1 ~ 0 ~ 1 )$. That is, denoting the $k$th iteration step of the Cantor set by $\mathcal{C}^1_k$, we can write
\begin{equation*}
  \mathcal{C}^1_k = \begin{pmatrix} 1 & 0 & 1 \end{pmatrix}^{\boxtimes k} = \underbrace{\begin{pmatrix} 1 & 0 & 1 \end{pmatrix} \boxtimes \begin{pmatrix} 1 & 0 & 1 \end{pmatrix} \boxtimes \dots \boxtimes \begin{pmatrix} 1 & 0 & 1 \end{pmatrix}}_{k\textrm{ times}}.
\end{equation*}
The iteration steps shown in Figure \ref{fig: Cantor} then correspond to
\begin{center}
\begin{tabular}{lcc}
 $\begin{pmatrix} 1 & 0 & 1 \end{pmatrix}^{\boxtimes 0}$ & $=$ & $\left( \arraycolsep=2pt \begin{array}{@{}*{27}{c}@{}} \phantom{0} & \phantom{0} & \phantom{0} & \phantom{0} & \phantom{0} & \phantom{0} & \phantom{0} & \phantom{0} & \phantom{0} & \phantom{0} & \phantom{0} & \phantom{0} & \phantom{0} & 1 & \phantom{0} & \phantom{0} & \phantom{0} & \phantom{0} & \phantom{0} & \phantom{0} & \phantom{0} & \phantom{0} & \phantom{0} & \phantom{0} & \phantom{0} & \phantom{0} & \phantom{0} \end{array} \right)$, \\[0.1cm]
 $\begin{pmatrix} 1 & 0 & 1 \end{pmatrix}^{\boxtimes 1}$ & $=$ & $\left( \arraycolsep=2pt \begin{array}{@{}*{27}{c}@{}} \phantom{0} & \phantom{0} & \phantom{0} & \phantom{0} & 1 & \phantom{0} & \phantom{0} & \phantom{0} & \phantom{0} & \phantom{0} & \phantom{0} & \phantom{0} & \phantom{0} & 0 & \phantom{0} & \phantom{0} & \phantom{0} & \phantom{0} & \phantom{0} & \phantom{0} & \phantom{0} & \phantom{0} & 1 & \phantom{0} & \phantom{0} & \phantom{0} & \phantom{0} \end{array} \right)$, \\[0.1cm]
 $\begin{pmatrix} 1 & 0 & 1 \end{pmatrix}^{\boxtimes 2}$ & $=$ & $\left( \arraycolsep=2pt \begin{array}{@{}*{27}{c}@{}} \phantom{0} & 1 & \phantom{0} & \phantom{0} & 0 & \phantom{0} & \phantom{0} & 1 & \phantom{0} & \phantom{0} & 0 & \phantom{0} & \phantom{0} & 0 & \phantom{0} & \phantom{0} & 0 & \phantom{0} & \phantom{0} & 1 & \phantom{0} & \phantom{0} & 0 & \phantom{0} & \phantom{0} & 1 & \phantom{0} \end{array} \right)$, \\[0.1cm]
 $\begin{pmatrix} 1 & 0 & 1 \end{pmatrix}^{\boxtimes 3}$ & $=$ & $\left( \arraycolsep=2pt \begin{array}{@{}*{27}{c}@{}} 1 & 0 & 1 & 0 & 0 & 0 & 1 & 0 & 1 & 0 & 0 & 0 & 0 & 0 & 0 & 0 & 0 & 0 & 1 & 0 & 1 & 0 & 0 & 0 & 1 & 0 & 1 \end{array} \right)$. \\
\end{tabular}
\end{center}

In a similar way, we can express the iteration steps of the construction of the Sierpinski carpet and the Menger sponge by (generalized) Kronecker products. If we denote the construction steps of the Sierpinski carpet by $\mathcal{C}^2_k$ and the construction steps of the Menger sponge as $\mathcal{C}^3_k$, these patterns can be expressed as
\begin{equation}\label{eq: sierpinksi, menger}
  \mathcal{C}^2_k =  \begin{pmatrix} 1 & 1 & 1 \\ 1 & 0 & 1 \\ 1 & 1 & 1 \end{pmatrix}^{\boxtimes k} 
  \quad \textrm{and} \quad 
  \mathcal{C}^3_k =  \left(\,\begin{matrix} 1 & 1 & 1 \\ 1 & 0 & 1 \\ 1 & 1 & 1 \end{matrix} \,\middle|\, \begin{matrix} 1 & 0 & 1 \\ 0 & 0 & 0 \\ 1 & 0 & 1 \end{matrix} \,\middle|\, \begin{matrix}  1 & 1 & 1 \\ 1 & 0 & 1 \\ 1 & 1 & 1 \end{matrix}\,\right)^{\boxtimes k},
\end{equation}  
respectively. For instance, Figure \ref{fig: sierpinski} (c) then corresponds to 
\newcommand\tikzmark[1]{%
  \tikz[remember picture,overlay]\coordinate (#1);}
\begin{equation*}
  \mathcal{C}^2_2 =  \begin{pmatrix} 1 & 1 & 1 \\ 1 & 0 & 1 \\ 1 & 1 & 1 \end{pmatrix}^{\boxtimes 2} =  
  \begin{pmatrix} 
  1 & 1 & 1\tikzmark{a} & 1 & 1 & 1\tikzmark{c} & 1 & 1 & 1 \\ 
  1 & 0 & 1             & 1 & 0 & 1 & 1 & 0 & 1 \\ 
  1 & 1 & 1             & 1 & 1 & 1 & 1 & 1 & 1 \\[0.15cm]
  1\tikzmark{e} & 1 & 1             & 0 & 0 & 0 & 1 & 1 & 1\tikzmark{f} \\ 
  1 & 0 & 1             & 0 & 0 & 0 & 1 & 0 & 1 \\ 
  1 & 1 & 1             & 0 & 0 & 0 & 1 & 1 & 1 \\[0.15cm]
  1\tikzmark{g} & 1 & 1             & 1 & 1 & 1 & 1 & 1 & 1\tikzmark{h} \\ 
  1 & 0 & 1             & 1 & 0 & 1 & 1 & 0 & 1 \\ 
  1 & 1 & 1\tikzmark{b} & 1 & 1 & 1\tikzmark{d} & 1 & 1 & 1 
  \end{pmatrix}.
\end{equation*} 
\tikz[remember picture,overlay]{
  \draw[dotted] ([xshift=0.8\tabcolsep,yshift=7.8pt]a.north) -- ([xshift=0.8\tabcolsep,yshift=-0.9pt]b.south);
  \draw[dotted] ([xshift=0.8\tabcolsep,yshift=7.8pt]c.north) -- ([xshift=0.8\tabcolsep,yshift=-0.9pt]d.south);
  \draw[dotted] ([xshift=-0.2cm,yshift=11.9pt]e.north) -- ([xshift=0.04cm,yshift=11.9pt]f.south);
  \draw[dotted] ([xshift=-0.2cm,yshift=11.9pt]g.north) -- ([xshift=0.04cm,yshift=11.9pt]h.south);
}

Note that we can directly deduce the fractal dimensions of these patterns by counting the non-zero entries of the defining tensors. That is, for a defining tensor $\mathbf{T} \in \mathbb{R}^{n \times n \times \dots \times n}$ it holds that
\begin{equation}\label{eq: fractal dimension - tensor}
  D_F = \frac{\ln(m)}{\ln(n)},
\end{equation}  
where $m$ is the number of entries of $\mathbf{T}$ with $\mathbf{T}_{x_1, \dots , x_d} = 1$. Considering Definition \ref{def: fractal dimension}, the number $m$ corresponds to the number of boxes required to cover the pattern. Since we assume that all dimensions of the defining tensor are equal to $n$, the side length of these boxes is $1/n$.

Two further examples of three-dimensional fractals are the Cantor dust, an alternative generalization of the Cantor set, and the 3D Vicsek fractal, see \cite{VICSEK1992}. Figure \ref{fig: dust, vicsek} illustrates the first construction steps of these structures. The defining tensors are given by
\begin{equation*}
  \left(\,\begin{matrix} 1 & 0 & 1 \\ 0 & 0 & 0 \\ 1 & 0 & 1 \end{matrix} \,\middle|\, \begin{matrix} 0 & 0 & 0 \\ 0 & 0 & 0 \\ 0 & 0 & 0 \end{matrix} \,\middle|\, \begin{matrix}  1 & 0 & 1 \\ 0 & 0 & 0 \\ 1 & 0 & 1 \end{matrix}\,\right)
\end{equation*}  
for the Cantor dust, and 
\begin{equation*}
  \left(\,\begin{matrix} 0 & 0 & 0 \\ 0 & 1 & 0 \\ 0 & 0 & 0 \end{matrix} \,\middle|\, \begin{matrix} 0 & 1 & 0 \\ 1 & 1 & 1 \\ 0 & 1 & 0 \end{matrix} \,\middle|\, \begin{matrix}  0 & 0 & 0 \\ 0 & 1 & 0 \\ 0 & 0 & 0 \end{matrix}\,\right) 
\end{equation*}  
for the Vicsek fractal. Similar to the Cantor set, the Cantor dust is totally disconnected. Therefore, it has a topological dimension of 0. The construction of the Vicsek fractal as shown in Figure \ref{fig: dust, vicsek} (d)--(f) converges to a curve within the unit cube, which implies that the topological dimension of this fractal is 1. Following \eqref{eq: fractal dimension - tensor}, we deduce the fractal dimensions $D_F= \ln(8)/\ln(3) \approx 1.8928$ and $D_F=\ln(7)/\ln(3) \approx 1.7712$, respectively.  

\begin{figure}[htb]
    \centering
    \begin{subfigure}[b]{0.3\textwidth}
        \centering
        \includegraphics[width=80px]{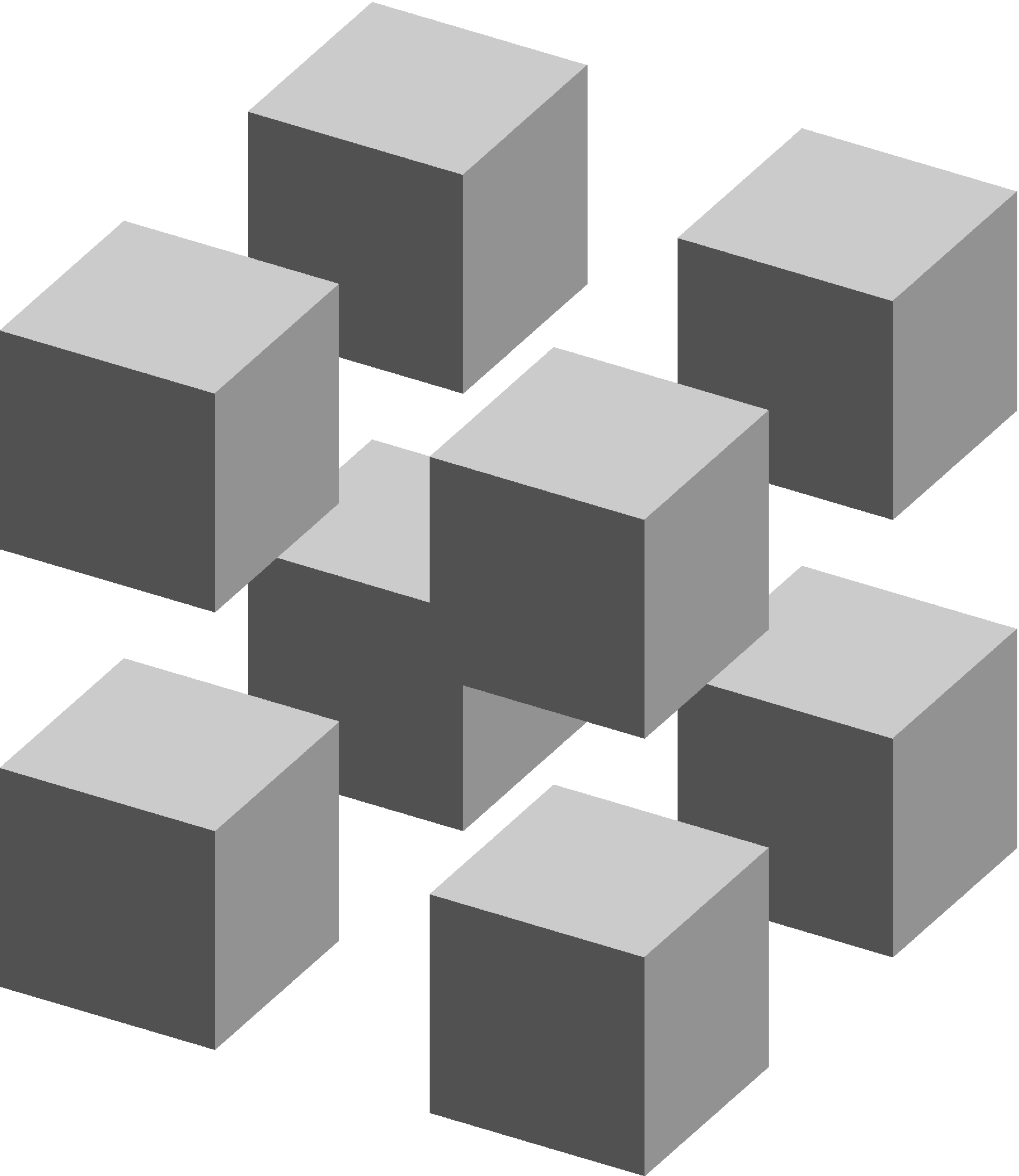}
        \caption{}
    \end{subfigure}
    \hfill
    \begin{subfigure}[b]{0.3\textwidth}
        \centering
        \includegraphics[width=80px]{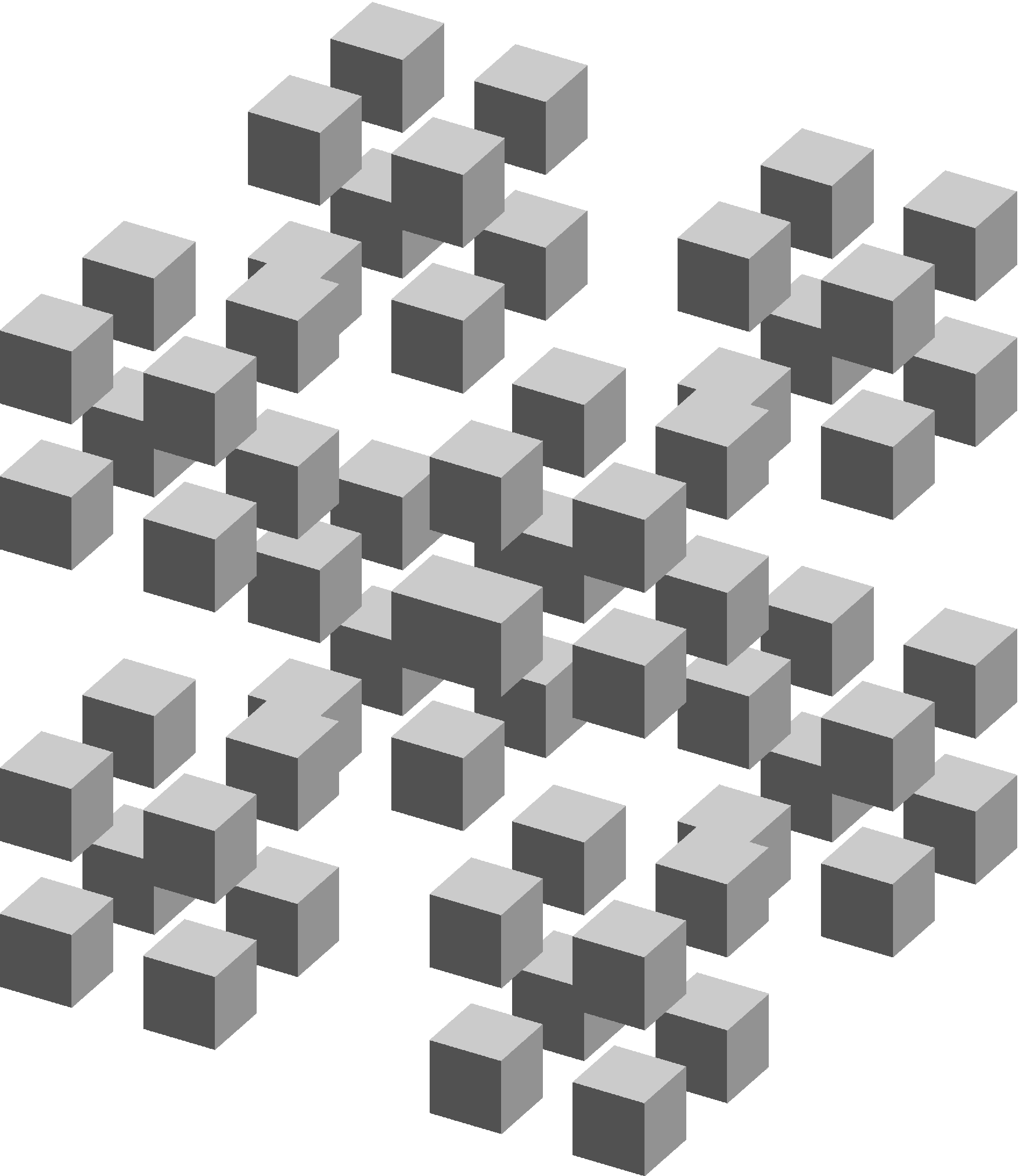}
        \caption{}
    \end{subfigure}
    \hfill
    \begin{subfigure}[b]{0.3\textwidth}
        \centering
        \includegraphics[width=80px]{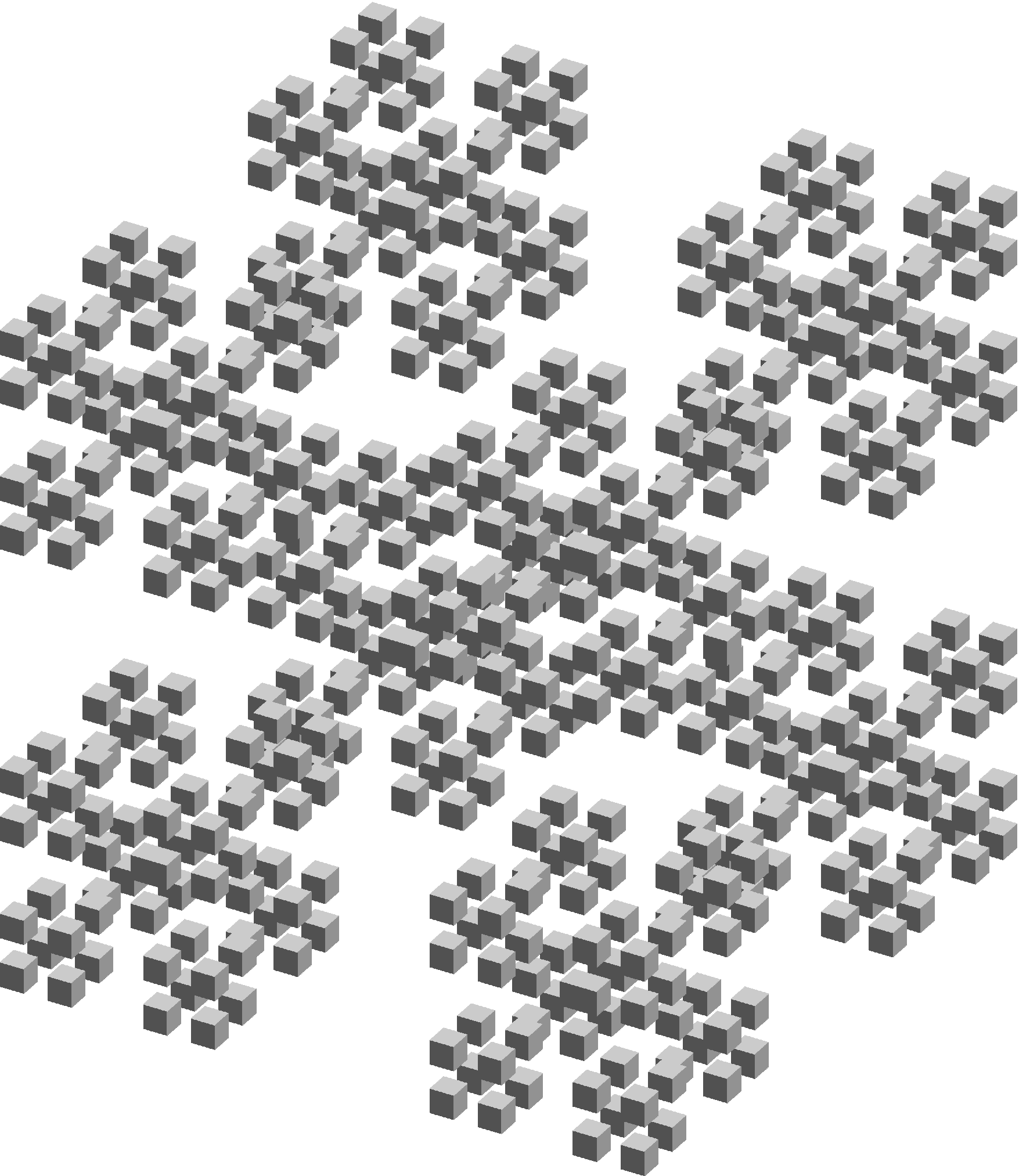}
        \caption{}
    \end{subfigure}\\[0.2cm]
    \begin{subfigure}[b]{0.3\textwidth}
        \centering
        \includegraphics[width=80px]{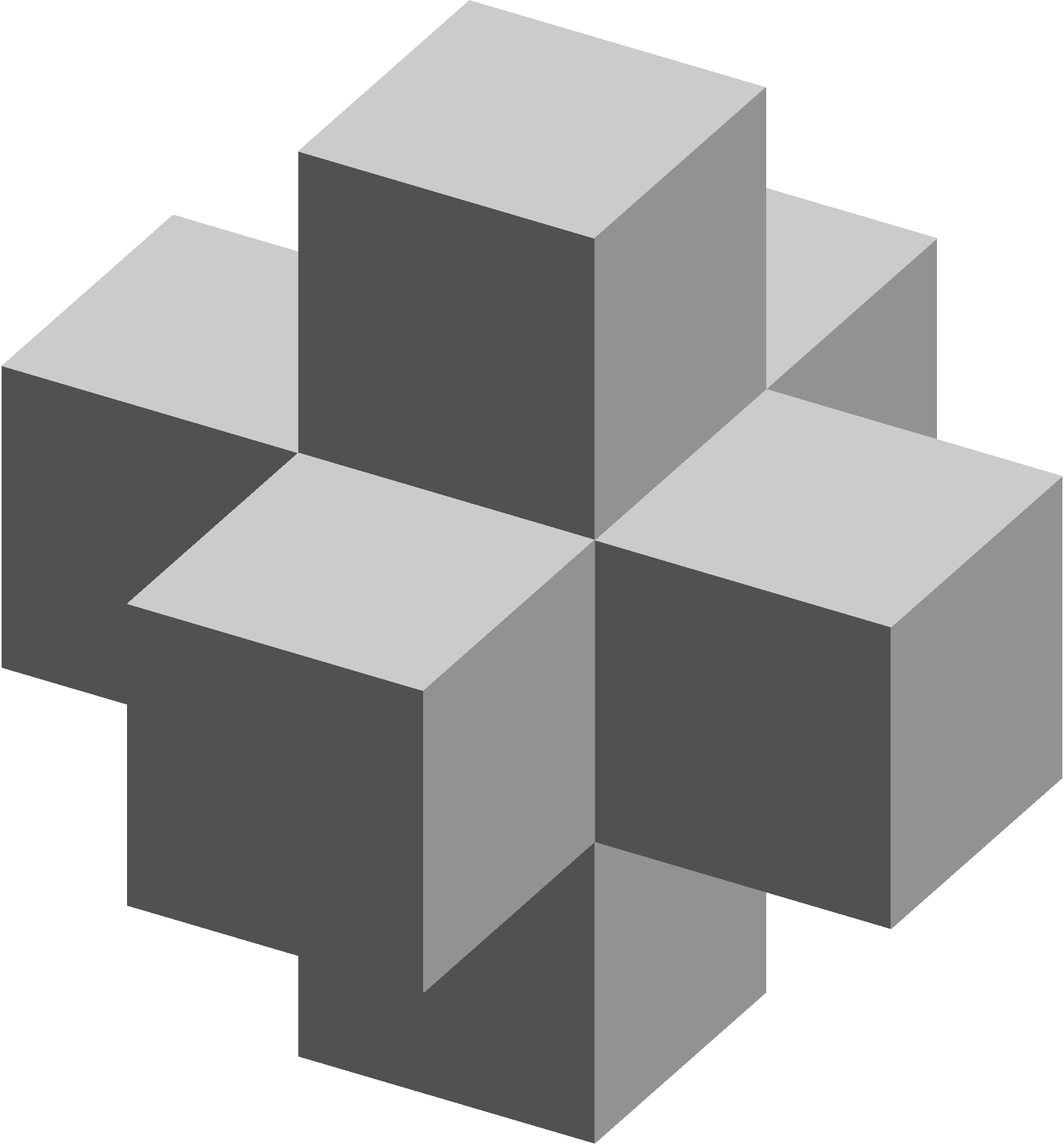}
        \caption{}
    \end{subfigure}
    \hfill
    \begin{subfigure}[b]{0.3\textwidth}
        \centering
        \includegraphics[width=80px]{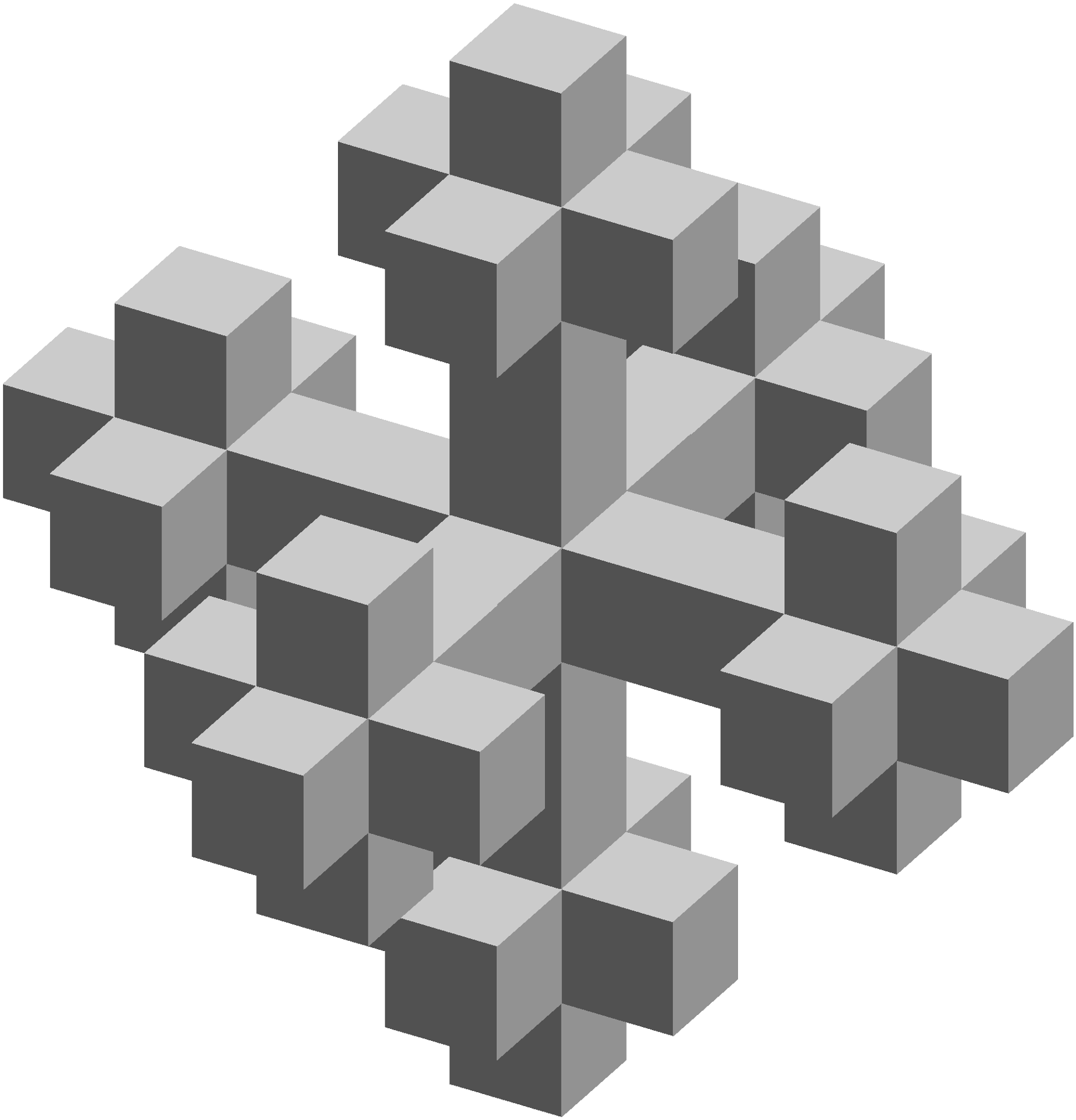}
        \caption{}
    \end{subfigure}
    \hfill
    \begin{subfigure}[b]{0.3\textwidth}
        \centering
        \includegraphics[width=80px]{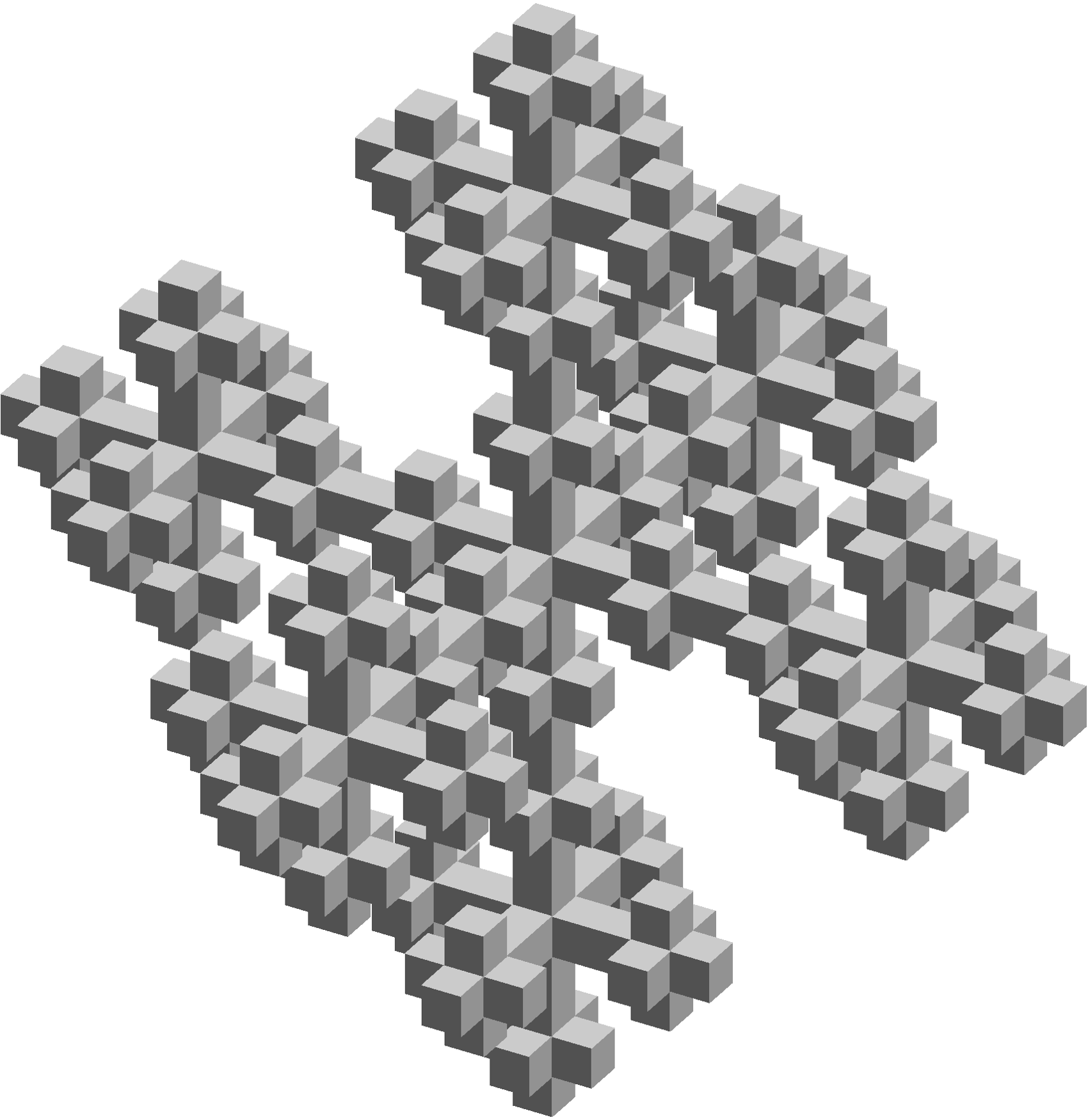}
        \caption{}
    \end{subfigure}
    \caption{Construction of the Cantor dust and Vicsek fractal: The first three iteration steps of the tensor-based construction of the three-dimensional Cantor dust are shown in (a)--(c), the iteration steps of the construction of the three-dimensional Vicsek fractal in (d)--(f). }
    \label{fig: dust, vicsek}
\end{figure}

In order to illustrate the multidimensional fractal construction using tensor-train decompositions, let us consider again the Sierpinski carpet, see Figure \ref{fig: sierpinski}, and the Menger sponge, see Figure \ref{fig: menger}. By decomposing the defining tensors given in \eqref{eq: sierpinksi, menger} into the TT format, we obtain
\begin{equation}\label{eq: TT 1}
\mathcal{C}^2_k =  \left( \begin{bmatrix} \begin{pmatrix} 1 \\ 1 \\ 1 \end{pmatrix} & \begin{pmatrix} 1 \\ 0 \\ 1 \end{pmatrix} \end{bmatrix} \otimes \begin{bmatrix} \begin{pmatrix} 1 \\ 0 \\ 1 \end{pmatrix} \\[0.5cm] \begin{pmatrix} 0 \\ 1 \\ 0 \end{pmatrix} \end{bmatrix} \right)^{\boxtimes k} ,
\end{equation}  
and
\begin{equation}\label{eq: TT 2}
\mathcal{C}^3_k =  \left( \begin{bmatrix} \begin{pmatrix} 1 \\ 1 \\ 1 \end{pmatrix} & \begin{pmatrix} 1 \\ 0 \\ 1 \end{pmatrix} \end{bmatrix} \otimes \begin{bmatrix} \begin{pmatrix} 1 \\ 0 \\ 1 \end{pmatrix} & \begin{pmatrix} 0 \\ 0 \\ 0 \end{pmatrix} \\[0.5cm] \begin{pmatrix} 0 \\ 1 \\ 0 \end{pmatrix} & \begin{pmatrix} 1 \\ 0 \\ 1 \end{pmatrix} \end{bmatrix} \otimes \begin{bmatrix} \begin{pmatrix} 1 \\ 0 \\ 1 \end{pmatrix} \\[0.5cm] \begin{pmatrix} 0 \\ 1 \\ 0 \end{pmatrix} \end{bmatrix} \right)^{\boxtimes k} ,
\end{equation} 
respectively. Here, we use the notation introduced at the end of Section \ref{sec:Tensor-Train Format}. One can see that the first and the last cores, respectively, of both decompositions \eqref{eq: TT 1} and \eqref{eq: TT 2} are equal. The question now is: What happens if we insert more middle cores? We fix the first and the last core of \eqref{eq: TT 2} and alter the number of cores in between, i.e.~the construction steps of the $d$-dimensional pattern can be expressed as
\begin{equation}\label{eq: TT 3}
\mathcal{C}^d_k =  \left( \begin{bmatrix} \begin{pmatrix} 1 \\ 1 \\ 1 \end{pmatrix} & \begin{pmatrix} 1 \\ 0 \\ 1 \end{pmatrix} \end{bmatrix} \otimes \underbrace{\begin{bmatrix} \begin{pmatrix} 1 \\ 0 \\ 1 \end{pmatrix} & \begin{pmatrix} 0 \\ 0 \\ 0 \end{pmatrix} \\[0.5cm] \begin{pmatrix} 0 \\ 1 \\ 0 \end{pmatrix} & \begin{pmatrix} 1 \\ 0 \\ 1 \end{pmatrix} \end{bmatrix} \otimes \dots \otimes \begin{bmatrix} \begin{pmatrix} 1 \\ 0 \\ 1 \end{pmatrix} & \begin{pmatrix} 0 \\ 0 \\ 0 \end{pmatrix} \\[0.5cm] \begin{pmatrix} 0 \\ 1 \\ 0 \end{pmatrix} & \begin{pmatrix} 1 \\ 0 \\ 1 \end{pmatrix} \end{bmatrix}}_{d-2\textrm{ times}} \otimes \begin{bmatrix} \begin{pmatrix} 1 \\ 0 \\ 1 \end{pmatrix} \\[0.5cm] \begin{pmatrix} 0 \\ 1 \\ 0 \end{pmatrix} \end{bmatrix} \right)^{\boxtimes k} ,
\end{equation} 
for $d \geq 2$. This systematic approach resembles the concept of so-called SLIM decompositions, which we introduced in \cite{GELSS2017}. It was shown that TT decompositions similar to \eqref{eq: TT 3} can be used to represent high-dimensional nearest-neighbor interaction systems. However, we here adapt the idea of SLIM decompositions for creating multidimensional counterparts of the Menger sponge, so-called \emph{multisponges}.

In order to calculate the fractal dimension $D_F^d$ of a multisponge in $\mathbb{R}^d$, we again consider the number $m$ of non-zero entries of the defining tensor. Due to the construction of the TT decompositions in \eqref{eq: TT 3}, $m$ can be computed in terms of the non-zero entries of every core vector resulting in
\begin{equation*}
m = (d+2) \cdot 2^{d-1}.
\end{equation*}  
A proof of this result can be found in Appendix \ref{app: non-zero multisponge}. The scaling factor is again $1/3$ so that the fractal dimension is given by
\begin{equation*}
D_F^d  = \frac{\ln(d+2) + (d-1)\cdot\ln(2)}{\ln(3)}.
\end{equation*}  

The $k$th iteration step of the $d$-dimensional multisponge comprises $m^k$ smaller multisponges, each with side lengths of $\frac{1}{3}^k$. Thus, the total ($d$-dimensional) volume at each step is
\begin{equation}\label{eq: volume multisponge}
V_k = m^k \cdot \left( \left( \frac{1}{3}\right)^{k} \right)^d = \left( \frac{m}{3^d}\right)^k = \left( \frac{(d+2) \cdot 2^{d-1}}{3^d}\right)^k,
\end{equation} 
which approaches 0 for $k \rightarrow \infty$ (see Appendix \ref{app: volume multisponge} for a proof). For fixed $k$, any subset of $C^d_k$ with dimension larger than $1$ and smaller than $d$ will be thoroughly perforated during the further iterations. This implies that the limit $C^d_\infty$, projected to $[0,1]^{\times d}$, can only have a topological dimension $d_T \leq 1$. Since $C^d_k$ is connected (see Appendix \ref{app: connectedness multisponge} for a proof), the topological dimension is equal to 1. Thus, the $d$-dimensional multisponge is a fractal curve.

At the end of this section, we additionally want to present a different approach for combining Kronecker and tensor products in order to construct fractal patterns. Consider a three-dimensional tensor $\mathbf{T} \in \mathbb{R}^{n \times n \times 3}$ representing RGB triplets. Every vector $\mathbf{T}_{x,y,:}$, $1 \leq x,y \leq n$, defines the color for one pixel $(x,y)$ of an image by specifying the intensities of red, green, and blue components of the color. For three given defining matrices $M_R, M_G, M_B \in \mathbb{R}^{m \times m}$, we compute the Kronecker powers $M_R^{\boxtimes d}$, $M_G^{\boxtimes d}$, $M_B^{\boxtimes d}$ and set $\mathbf{T}_{:,:,1} = M_R^{\boxtimes d}$, $\mathbf{T}_{:,:,2} = M_G^{\boxtimes d}$, and $\mathbf{T}_{:,:,3} = M_B^{\boxtimes d}$. That is, using a similar core notation for the Kronecker product as for the tensor product, cf.~\eqref{eq: core notation - single core} and \eqref{eq: core notation - all cores}, the RGB tensor $\mathbf{T} \in \mathbb{R}^{n \times n \times 3}$, $n = m^d$, can be written as 
\begin{equation*}
\mathbf{T} =   \begin{bmatrix} M_R & M_G & M_B \end{bmatrix} \boxtimes \begin{bmatrix} M_R & 0 & 0 \\ 0 & M_G & 0 \\ 0 & 0 & M_B \end{bmatrix} \boxtimes \dots \boxtimes \begin{bmatrix} M_R & 0 & 0 \\ 0 & M_G & 0 \\ 0 & 0 & M_B \end{bmatrix} \otimes \begin{bmatrix} \begin{pmatrix} 1 \\ 0 \\ 0 \end{pmatrix} \\[0.5cm] \begin{pmatrix} 0 \\ 1 \\ 0 \end{pmatrix} \\[0.5cm] \begin{pmatrix} 0 \\ 0 \\ 1 \end{pmatrix} \end{bmatrix}.
\end{equation*}
In this way, we construct one fractal pattern per RGB layer and combine these layers to a quasi fractal image. Figure \ref{fig: color} shows three examples of RGB fractals. The corresponding matrices $M_R$, $M_G$, and $M_B$ can be found in Appendix \ref{app: color}. Note that the images in Figure \ref{fig: color} as such are not strictly fractal. However, we can still find quasi self-similar patterns at different scales if we ignore the varying intensities of the RGB channels. 

\begin{figure}[htb]
    \centering
    \begin{subfigure}[b]{0.3\textwidth}
        \centering
        \includegraphics[width=100px]{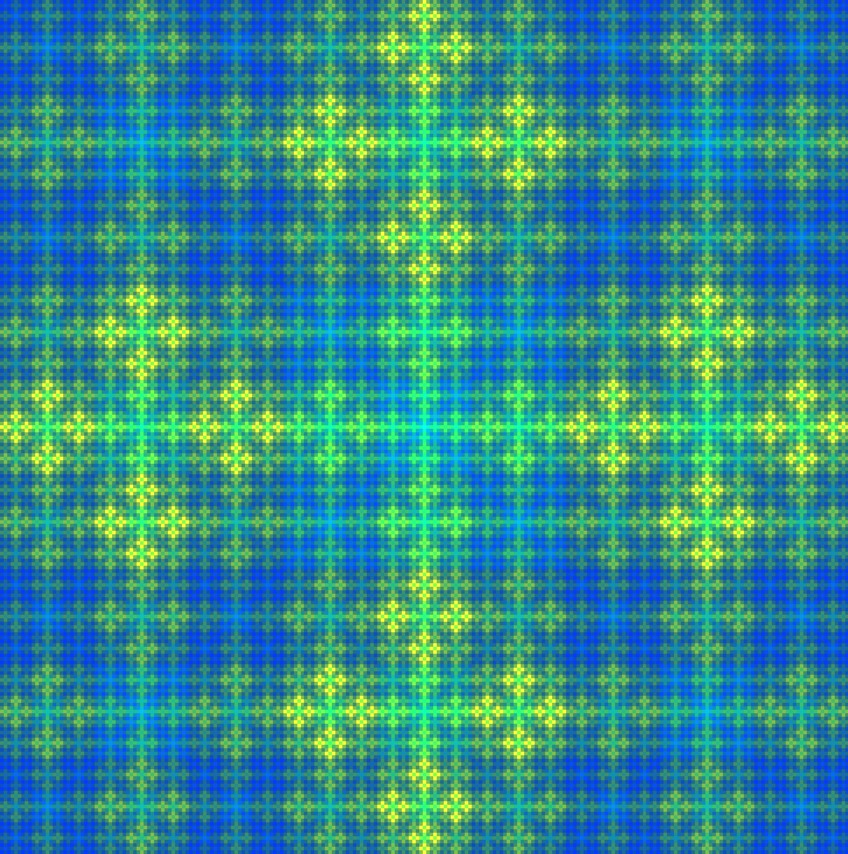}
        \caption{}
        \label{fig: color a}
    \end{subfigure}
    \hfill
    \begin{subfigure}[b]{0.3\textwidth}
        \centering
        \includegraphics[width=100px]{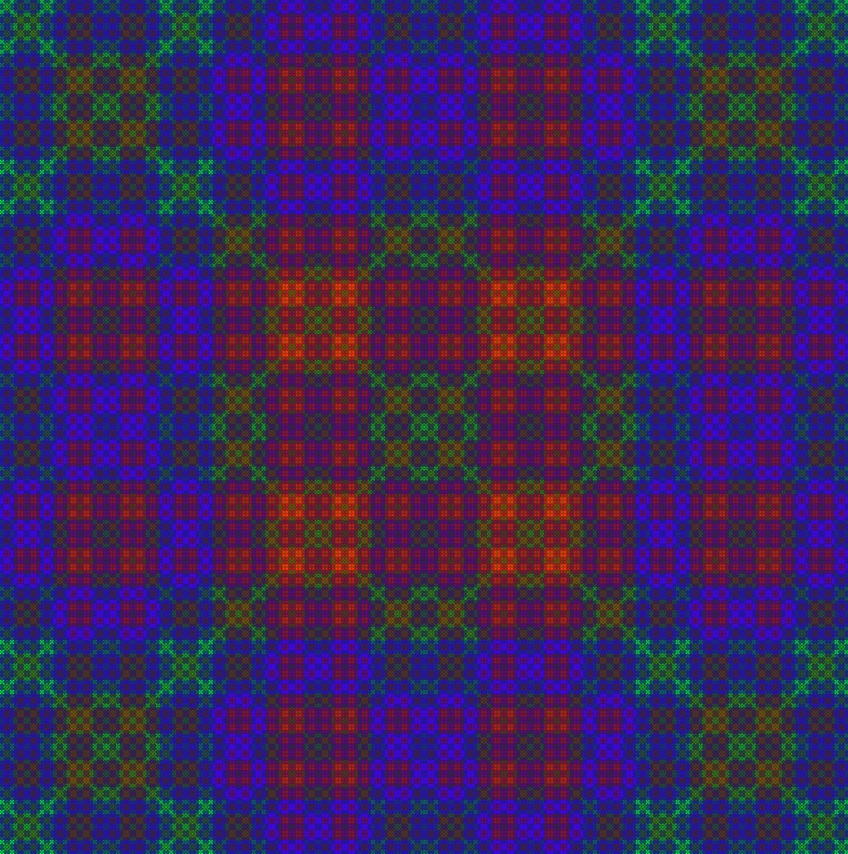}
        \caption{}
        \label{fig: color b}
    \end{subfigure}
    \hfill
    \begin{subfigure}[b]{0.3\textwidth}
        \centering
        \includegraphics[width=100px]{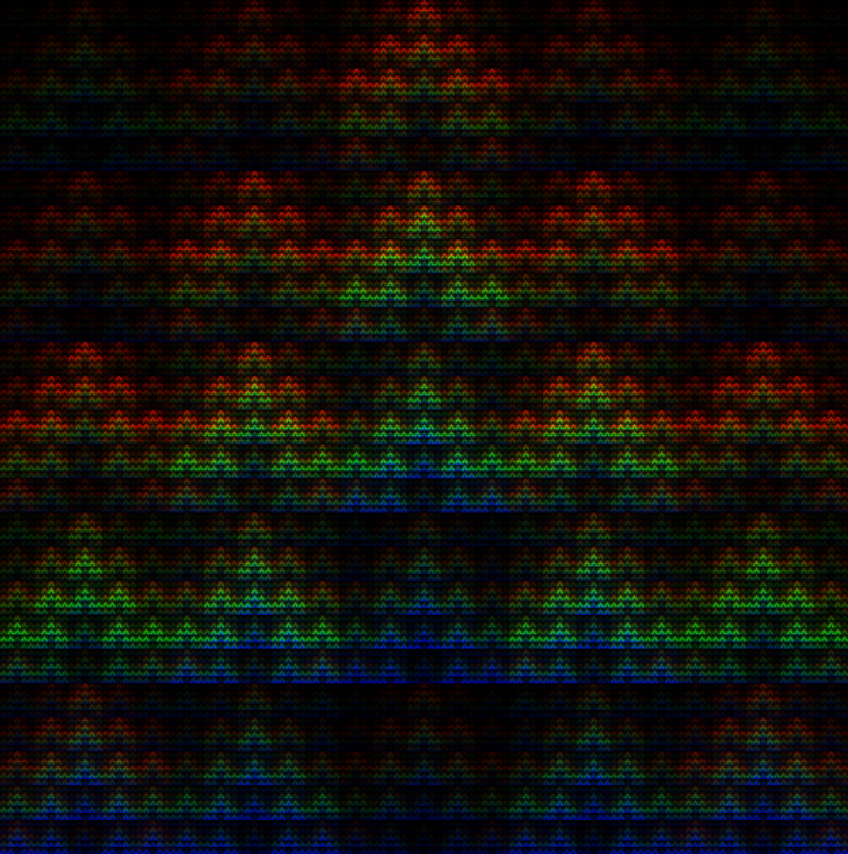}
        \caption{}
        \label{fig: color c}
    \end{subfigure}
    \caption{RGB fractals: The pictures shown are given by three-dimensional tensors $\mathbf{T} \in \mathbb{R}^{n \times n \times 3}$ constructed as described above. The respective mode sizes $n$ are given by $n = m^5$ with (a) $m=3$, (b) $m = 4$, and (c) $m=5$.}
    \label{fig: color}
\end{figure}

\section{Conclusion}
\label{sec:Conclusion}

In this paper, we proposed a method to construct geometric fractals by exploiting Kronecker products and tensor decompositions. In particular, we focused on the so-called TT format which has been frequently used in the past years to mitigate the curse of dimensionality in many different application areas. The approach was illustrated with several examples for well-known fractal patterns which are usually created by using, e.g., iterated function systems. First, we have shown how to construct self-similar structures in one, two, and three dimensions. Then, we described the generalization to arbitrary dimensions by constructing so-called multisponges. Additionally, we presented the application of tensor decompositions and Kronecker products to construct RGB fractals. 

We demonstrated that it is possible to use tensor decompositions for the construction of fractal patterns, which may be one step towards the aim of using tensor decompositions for finding self-similar structures in complex numerical data and providing tensor-based representations of the data with high compression ratios. Other possible applications may be found in the fields of fractal image compression or computer graphics. 

Our future work will include the investigation of the relation between self-similar structures of tensors and the characteristics of the cores of corresponding TT representations. Moreover, we will consider the tensor-based construction of further (quasi) fractal structures as well as the detection of self-similar patterns in large data sets by applying different tensor formats.

\section*{Acknowledgements}

This research has been funded by Deutsche Forschungsgemeinschaft (DFG) through grant CRC 1114 `Scaling Cascades in Complex Systems', Project B03 `Multilevel coarse graining of multiscale problems'. Special thanks should be given to Sebastian Matera and Stefan Klus for the interesting discussions and valuable inputs.

\bibliographystyle{unsrturl}
\bibliography{References}

\begin{thebibliography}{10}

\bibitem{RICHARDSON1961}
L.~F. Richardson.
\newblock {\em The problem of contiguity: An appendix to statistics of deadly
  quarrels}, volume~6, pages 140--187.
\newblock Ann Arbor, 1961.

\bibitem{MANDELBROT1967}
B.~Mandelbrot.
\newblock How long is the coast of {B}ritain? {S}tatistical self-similarity and
  fractional dimension.
\newblock {\em Science}, 156(3775):636--638, 1967.
\newblock \href {http://dx.doi.org/10.1126/science.156.3775.636}
  {\path{doi:10.1126/science.156.3775.636}}.

\bibitem{KOCH1904}
H.~von Koch.
\newblock {\em Sur une courbe continue sans tangente obtenue par une
  construction geometrique elementaire}.
\newblock Norstedt \& soner, 1904.

\bibitem{WEIERSTRASS1886}
K.~Weierstrass.
\newblock {\em {A}bhandlungen aus der {F}unctionenlehre}.
\newblock J. Springer, 1886.

\bibitem{MANDELBROT1975}
B.~Mandelbrot.
\newblock {\em Les objets fractals: Forme, hasard et dimension}.
\newblock Flammarion, 1975.

\bibitem{RINALDO1993}
A.~Rinaldo, I.~Rodriguez-Iturbe, R.~Rigon, E.~Ijjasz-Vasquez, and R.~L. Bras.
\newblock Self-organized fractal river networks.
\newblock {\em Physical Review Letters}, 70:822--825, 1993.
\newblock \href {http://dx.doi.org/10.1103/PhysRevLett.70.822}
  {\path{doi:10.1103/PhysRevLett.70.822}}.

\bibitem{ZEIDE1991}
B.~Zeide and P.~Pfeifer.
\newblock A method for estimation of fractal dimension of tree crowns.
\newblock {\em Forest science}, 37(5):1253--1265, 1991.

\bibitem{PEITGEN2004}
H.~O. Peitgen, H.~J{\"u}rgens, and D.~Saupe.
\newblock {\em Chaos and fractals: {N}ew frontiers of science}.
\newblock Springer New York, 2004.
\newblock \href {http://dx.doi.org/10.1007/b97624} {\path{doi:10.1007/b97624}}.

\bibitem{MANDELBROT1982}
B.~Mandelbrot.
\newblock {\em The fractal geometry of nature}.
\newblock Freeman and Co., 1982.

\bibitem{BOLZANO1930}
B.~Bolzano and K.~Rychl{\'\i}k.
\newblock {\em Functionenlehre}.
\newblock K{\"o}nigliche b{\"o}hmische Gesellschaft der Wissenschaften, 1930.

\bibitem{PEANO1890}
G.~Peano.
\newblock Sur une courbe, qui remplit toute une aire plane.
\newblock {\em Mathematische Annalen}, 36(1):157--160, 1890.
\newblock \href {http://dx.doi.org/10.1007/BF01199438}
  {\path{doi:10.1007/BF01199438}}.

\bibitem{HILBERT1891}
D.~Hilbert.
\newblock {\"U}ber die stetige {A}bbildung einer {L}inie auf ein
  {F}l{\"a}chenst{\"u}ck.
\newblock {\em Mathematische Annalen}, 38(3):459--460, 1891.
\newblock \href {http://dx.doi.org/10.1007/BF01199431}
  {\path{doi:10.1007/BF01199431}}.

\bibitem{WERNER1999}
D.~H. Werner, R.~L. Haupt, and P.~L. Werner.
\newblock Fractal antenna engineering: The theory and design of fractal antenna
  arrays.
\newblock {\em IEEE Antennas and Propagation Magazine}, 41(5):37--58, 1999.
\newblock \href {http://dx.doi.org/10.1109/74.801513}
  {\path{doi:10.1109/74.801513}}.

\bibitem{BARNSLEY1993}
M.~F. Barnsley and L.~P. Hurd.
\newblock {\em Fractal image compression}.
\newblock A. K. Peters, Ltd., 1993.

\bibitem{ENCARNACAO1992}
J.~L. Encarnacao, H.~O. Peitgen, G.~Sakas, and G.~Englert.
\newblock {\em Fractal geometry and computer graphics}.
\newblock {B}eitr{\"a}ge zur {G}raphischen {D}atenverarbeitung. Springer Berlin
  Heidelberg, 1992.
\newblock \href {http://dx.doi.org/10.1007/978-3-642-95678-2}
  {\path{doi:10.1007/978-3-642-95678-2}}.

\bibitem{KAZEEV2014}
V.~Kazeev, M.~Khammash, M.~Nip, and C.~Schwab.
\newblock Direct solution of the chemical master equation using quantized
  tensor trains.
\newblock {\em PLOS Computational Biology}, 10(3):e1003359, 2014.
\newblock \href {http://dx.doi.org/10.1371/journal.pcbi.1003359}
  {\path{doi:10.1371/journal.pcbi.1003359}}.

\bibitem{DOLGOV2015}
S.~Dolgov and B.~Khoromskij.
\newblock Simultaneous state-time approximation of the chemical master equation
  using tensor product formats.
\newblock {\em Numerical Linear Algebra with Applications}, 22(2):197--219,
  2015.
\newblock \href {http://dx.doi.org/10.1002/nla.1942}
  {\path{doi:10.1002/nla.1942}}.

\bibitem{GELSS2017}
P.~Gel{\ss}, S.~Klus, S.~Matera, and C.~Sch{\"u}tte.
\newblock Nearest-neighbor interaction systems in the tensor-train format.
\newblock {\em Journal of Computational Physics}, 341:140--162, 2017.
\newblock \href {http://dx.doi.org/10.1016/j.jcp.2017.04.007}
  {\path{doi:10.1016/j.jcp.2017.04.007}}.

\bibitem{HITCHCOCK1927}
F.~L. Hitchcock.
\newblock The expression of a tensor or a polyadic as a sum of products.
\newblock {\em Journal of Mathematics and Physics}, 6:164--189, 1927.
\newblock \href {http://dx.doi.org/10.1002/sapm192761164}
  {\path{doi:10.1002/sapm192761164}}.

\bibitem{TUCKER1963}
L.~R. Tucker.
\newblock Implications of factor analysis of three-way matrices for measurement
  of change.
\newblock In C.~W. Harris, editor, {\em Problems in measuring change}, pages
  122--137. University of Wisconsin Press, 1963.

\bibitem{TUCKER1964}
L.~R. Tucker.
\newblock {T}he extension of factor analysis to three-dimensional matrices.
\newblock In H.~Gulliksen and N.~Frederiksen, editors, {\em Contributions to
  mathematical psychology}, pages 110--127. Holt, Rinehart and Winston, 1964.

\bibitem{OSELEDETS2009}
I.~V. Oseledets.
\newblock A new tensor decomposition.
\newblock {\em Doklady Mathematics}, 80(1):495--496, 2009.
\newblock \href {http://dx.doi.org/10.1134/S1064562409040115}
  {\path{doi:10.1134/S1064562409040115}}.

\bibitem{OSELEDETS2009b}
I.~V. Oseledets and E.~E. Tyrtyshnikov.
\newblock Breaking the curse of dimensionality, or how to use {SVD} in many
  dimensions.
\newblock {\em SIAM Journal on Scientific Computing}, 31(5):3744--3759, 2009.
\newblock \href {http://dx.doi.org/10.1137/090748330}
  {\path{doi:10.1137/090748330}}.

\bibitem{OSELEDETS2011}
I.~V. Oseledets.
\newblock Tensor-train decomposition.
\newblock {\em SIAM Journal on Scientific Computing}, 33(5):2295--2317, 2011.
\newblock \href {http://dx.doi.org/10.1137/090752286}
  {\path{doi:10.1137/090752286}}.

\bibitem{LARCHER2017}
T.~von Larcher and R.~Klein.
\newblock On identification of self-similar characteristics using the tensor
  train decomposition method with application to channel turbulence flow, 2017.
\newblock \href {http://arxiv.org/abs/1708.07780} {\path{arXiv:1708.07780}}.

\bibitem{XUE1996}
D.~Xue, Y.~Zhu, G.-X. Zhu, and X.~Yan.
\newblock Generalized {K}ronecker product and fractals.
\newblock {\em Proceedings of the SPIE}, 2644:75--78, 1996.
\newblock \href {http://dx.doi.org/10.1117/12.235499}
  {\path{doi:10.1117/12.235499}}.

\bibitem{HARDY2008}
A.~Hardy and W.~H. Steeb.
\newblock {\em Mathematical tools in computer graphics with {C}\#
  implementations}.
\newblock World Scientific, 2008.

\bibitem{LESKOVEC2010}
J.~Leskovec, D.~Chakrabarti, J.~Kleinberg, C.~Faloutsos, and Z.~Ghahramani.
\newblock Kronecker graphs: {A}n approach to modeling networks.
\newblock {\em Journal of Machine Learning Research}, 11:985--1042, 2010.

\bibitem{HAUSDORFF1918}
F.~Hausdorff.
\newblock Dimension und {\"a}u{\ss}eres {M}a{\ss}.
\newblock {\em Mathematische Annalen}, 79(1):157--179, 1918.
\newblock \href {http://dx.doi.org/10.1007/BF01457179}
  {\path{doi:10.1007/BF01457179}}.

\bibitem{LIEBOVITCH1989}
L.~S. Liebovitch and T.~Toth.
\newblock A fast algorithm to determine fractal dimensions by box counting.
\newblock {\em Physics Letters A}, 141(8):386--390, 1989.
\newblock \href {http://dx.doi.org/10.1016/0375-9601(89)90854-2}
  {\path{doi:10.1016/0375-9601(89)90854-2}}.

\bibitem{SARKAR1994}
N.~Sarkar and B.~B. Chaudhuri.
\newblock An efficient differential box-counting approach to compute fractal
  dimension of image.
\newblock {\em IEEE Transactions on Systems, Man, and Cybernetics},
  24(1):115--120, 1994.
\newblock \href {http://dx.doi.org/10.1109/21.259692}
  {\path{doi:10.1109/21.259692}}.

\bibitem{CANTOR1883}
G.~Cantor.
\newblock {\"U}ber die stetige {A}bbildung einer {L}inie auf ein
  {F}l{\"a}chenst{\"u}ck.
\newblock {\em Mathematische Annalen}, 21(4):545--591, 1883.
\newblock \href {http://dx.doi.org/10.1007/BF01446819}
  {\path{doi:10.1007/BF01446819}}.

\bibitem{SECELEAN2012}
N.~A. Secelean.
\newblock The existence of the attractor of countable iterated function
  systems.
\newblock {\em Mediterranean Journal of Mathematics}, 9(1):61--79, 2012.
\newblock \href {http://dx.doi.org/10.1007/s00009-011-0116-x}
  {\path{doi:10.1007/s00009-011-0116-x}}.

\bibitem{SIERPINSKI1916}
W.~Sierpi\'nski.
\newblock Sur une courbe cantorienne qui contient une image biunivoque et
  continue de toute courbe donn\'ee.
\newblock {\em Comptes Rendus Hebdomadaires des S\'eances de l'Acad\'emie des
  Sciences}, 162:629--632, 1916.

\bibitem{MENGER1916}
K.~Menger.
\newblock {A}llgemeine {R}{\"a}ume und cartesische {R}{\"a}ume. {I}.
\newblock {\em Proceedings Amsterdam}, 29:476--482, 1926.

\bibitem{KOLDA2009}
T.~G. Kolda and B.~W. Bader.
\newblock Tensor decompositions and applications.
\newblock {\em SIAM Review}, 51(3):455--500, 2009.
\newblock \href {http://dx.doi.org/10.1137/07070111X}
  {\path{doi:10.1137/07070111X}}.

\bibitem{COHEN2015b}
J.~E. Cohen.
\newblock About notations in multiway array processing, 2015.
\newblock \href {http://arxiv.org/abs/1511.01306} {\path{arXiv:1511.01306}}.

\bibitem{CICHOCKI2016}
A.~Cichocki, N.~Lee, I.~Oseledets, A.-H. Phan, Q.~Zhao, and D.~P. Mandic.
\newblock Tensor networks for dimensionality reduction and large-scale
  optimization: {P}art 1 low-rank tensor decompositions.
\newblock {\em Foundations and Trends in Machine Learning}, 9(4--5):249--429,
  2016.
\newblock \href {http://dx.doi.org/10.1561/2200000059}
  {\path{doi:10.1561/2200000059}}.

\bibitem{ARNOLD2013}
A.~Arnold and T.~Jahnke.
\newblock On the approximation of high-dimensional differential equations in
  the hierarchical {T}ucker format.
\newblock {\em BIT Numerical Mathematics}, 54(2):305--341, 2013.
\newblock \href {http://dx.doi.org/10.1007/s10543-013-0444-2}
  {\path{doi:10.1007/s10543-013-0444-2}}.

\bibitem{LUBICH2013}
C.~Lubich, T.~Rohwedder, R.~Schneider, and B.~Vandereycken.
\newblock Dynamical approximation by hierarchical {T}ucker and tensor-train
  tensors.
\newblock {\em SIAM Journal on Matrix Analysis and Applications},
  34(2):470--494, 2013.
\newblock \href {http://dx.doi.org/10.1137/120885723}
  {\path{doi:10.1137/120885723}}.

\bibitem{WHITE1992}
S.~R. White.
\newblock Density matrix formulation for quantum renormalization groups.
\newblock {\em Physical Review Letters}, 69(19):2863--2866, 1992.
\newblock \href {http://dx.doi.org/10.1103/PhysRevLett.69.2863}
  {\path{doi:10.1103/PhysRevLett.69.2863}}.

\bibitem{MEYER2009}
H.~D. Meyer, F.~Gatti, and G.~A. {Worth~(Eds.)}.
\newblock {\em Multidimensional quantum dynamics: {MCTDH} theory and
  applications}.
\newblock Wiley-VCH Verlag GmbH {\&} Co. KGaA, 2009.
\newblock \href {http://dx.doi.org/10.1002/9783527627400.ch3}
  {\path{doi:10.1002/9783527627400.ch3}}.

\bibitem{GELSS2016}
P.~Gel{\ss}, S.~Matera, and C.~Sch{\"u}tte.
\newblock Solving the master equation without kinetic {M}onte carlo: {T}ensor
  train approximations for a {CO} oxidation model.
\newblock {\em Journal of Computational Physics}, 314:489--502, 2016.
\newblock \href {http://dx.doi.org/10.1016/j.jcp.2016.03.025}
  {\path{doi:10.1016/j.jcp.2016.03.025}}.

\bibitem{KRESSNER2014}
D.~Kressner and F.~Macedo.
\newblock Low-rank tensor methods for communicating {M}arkov processes.
\newblock {\em Quantitative Evaluation of Systems, Lecture Notes in Computer
  Science}, 8657:25--40, 2014.

\bibitem{NOVIKOV2015}
A.~Novikov, D.~Podoprikhin, A.~Osokin, and D.~Vetrov.
\newblock Tensorizing neural networks.
\newblock In C.~Cortes, N.~D. Lawrence, D.~D. Lee, M.~Sugiyama, and R.~Garnett,
  editors, {\em Advances in Neural Information Processing Systems 28 (NIPS)},
  pages 442--450. Curran Associates, Inc., 2015.
\newblock \href {http://arxiv.org/abs/1509.06569v2}
  {\path{arXiv:1509.06569v2}}.

\bibitem{COHEN2015}
N.~Cohen, O.~Sharir, and A.~Shashua.
\newblock On the expressive power of deep learning: {A} tensor analysis, 2015.
\newblock \href {http://arxiv.org/abs/1509.05009} {\path{arXiv:1509.05009}}.

\bibitem{KLUS2016}
S.~Klus and C.~Sch{\"u}tte.
\newblock Towards tensor-based methods for the numerical approximation of the
  {P}erron--{F}robenius and {K}oopman operator.
\newblock {\em Journal of Computational Dynamics}, 3(2), 2016.
\newblock \href {http://dx.doi.org/10.3934/jcd.2016007}
  {\path{doi:10.3934/jcd.2016007}}.

\bibitem{KLUS2016b}
S.~Klus, P.~Gel{\ss}, S.~Peitz, and C.~Sch{\"u}tte.
\newblock Tensor-based dynamic mode decomposition, 2016.
\newblock \href {http://arxiv.org/abs/1606.06625} {\path{arXiv:1606.06625}}.

\bibitem{HACKBUSCH2012}
W.~Hackbusch.
\newblock {\em Tensor spaces and numerical tensor calculus}, volume~42 of {\em
  Springer Series in Computational Mathematics}.
\newblock Springer, 2012.
\newblock \href {http://dx.doi.org/10.1007/978-3-642-28027-6}
  {\path{doi:10.1007/978-3-642-28027-6}}.

\bibitem{KAZEEV2012}
V.~Kazeev and B.~Khoromskij.
\newblock Low-rank explicit {QTT} representation of the {L}aplace operator and
  its inverse.
\newblock {\em SIAM Journal on Matrix Analysis and Applications},
  33(3):742--758, 2012.
\newblock \href {http://dx.doi.org/10.1137/100820479}
  {\path{doi:10.1137/100820479}}.

\bibitem{VICSEK1992}
T.~Vicsek.
\newblock {\em Fractal growth phenomena}.
\newblock World Scientific, 1992.

\end{thebibliography}


\appendix

\section{Iterated function systems}

\subsection{Sierpinski carpet}\label{app: IFS - sierpinski}

The IFS corresponding to the Sierpinski carpet is given by
\begin{equation*}
  \left\lbrace 
    \begin{array}{ll} 
      f_1 (x,y) = \frac{1}{3} (x,y),  					& f_2 (x,y) = \frac{1}{3} (x,y) + (\frac{1}{3}, 0), \\[0.1cm] 
      f_3 (x,y) = \frac{1}{3} (x,y) + (\frac{2}{3}, 0), 		& f_4 (x,y) = \frac{1}{3} (x,y) + (0, \frac{1}{3}), \\[0.1cm] 
      f_5 (x,y) = \frac{1}{3} (x,y) + (\frac{2}{3}, \frac{1}{3}), 	& f_6 (x,y) = \frac{1}{3} (x,y) + (0, \frac{2}{3}), \\[0.1cm] 
      f_7 (x,y) = \frac{1}{3} (x,y) + (\frac{1}{3}, \frac{2}{3}), 	& f_8 (x,y) = \frac{1}{3} (x,y) + (\frac{2}{3}, \frac{2}{3})
    \end{array}
  \right\rbrace.
\end{equation*}

\subsection{Menger sponge}\label{app: IFS - menger}

The IFS corresponding to the Menger sponge is given by
\begin{equation*}
  \left\lbrace 
    \begin{array}{ll} 
      f_1 (x,y) = \frac{1}{3} (x,y,z),  					& f_2 (x,y) = \frac{1}{3} (x,y,z) + (\frac{1}{3}, 0, 0), \\[0.1cm] 
      f_3 (x,y) = \frac{1}{3} (x,y,z) + (\frac{2}{3}, 0, 0), 			& f_4 (x,y) = \frac{1}{3} (x,y,z) + (0, \frac{1}{3}, 0), \\[0.1cm] 
      f_5 (x,y) = \frac{1}{3} (x,y,z) + (\frac{2}{3}, \frac{1}{3}, 0), 		& f_6 (x,y) = \frac{1}{3} (x,y,z) + (0,\frac{2}{3}, 0), \\[0.1cm] 
      f_7 (x,y) = \frac{1}{3} (x,y,z) + (\frac{1}{3}, \frac{2}{3}, 0), 		& f_8 (x,y) = \frac{1}{3} (x,y,z) + (\frac{2}{3}, \frac{2}{3},0), \\[0.1cm]
      f_9 (x,y) = \frac{1}{3} (x,y,z) + (0, 0, \frac{1}{3}),  			& f_{10} (x,y) = \frac{1}{3} (x,y,z) + (\frac{2}{3}, 0, \frac{1}{3}), \\[0.1cm] 
      f_{11} (x,y) = \frac{1}{3} (x,y,z) + (0, \frac{2}{3}, \frac{1}{3}), 	& f_{12} (x,y) = \frac{1}{3} (x,y,z) + (\frac{2}{3}, \frac{2}{3}, \frac{1}{3}), \\[0.1cm] 
      f_{13} (x,y) = \frac{1}{3} (x,y,z) + (0, 0, \frac{2}{3}), 	& f_{14} (x,y) = \frac{1}{3} (x,y,z) + (\frac{1}{3}, 0, \frac{2}{3}), \\[0.1cm] 
      f_{15} (x,y) = \frac{1}{3} (x,y,z) + (\frac{2}{3}, 0, \frac{2}{3}), 	& f_{16} (x,y) = \frac{1}{3} (x,y,z) + (0, \frac{1}{3}, \frac{2}{3}), \\[0.1cm]
      f_{17} (x,y) = \frac{1}{3} (x,y,z) + (\frac{2}{3}, \frac{1}{3}, \frac{2}{3}), 	& f_{18} (x,y) = \frac{1}{3} (x,y,z) + (0, \frac{2}{3}, \frac{2}{3}), \\[0.1cm] 
      f_{19} (x,y) = \frac{1}{3} (x,y,z) + (\frac{1}{3}, \frac{2}{3}, \frac{2}{3}), 	& f_{20} (x,y) = \frac{1}{3} (x,y,z) + (\frac{2}{3}, \frac{2}{3}, \frac{2}{3})
    \end{array}
  \right\rbrace.
\end{equation*}

\section{Proofs}

\subsection{Non-zero entries of the defining tensor of the $d$-dimensional multisponge}\label{app: non-zero multisponge}

\begin{theorem}
 The number $m$ of non-zero entries of the defining tensor as given in \eqref{eq: TT 3} is $(d+2)\cdot 2^{d-1} $.
\end{theorem}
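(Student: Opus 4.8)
The plan is to rewrite the TT contraction as a product of mode-dependent \emph{slice} matrices and then exploit nilpotency to reduce the count to two easy cases. Concretely, for a fixed index tuple $(x_1,\dots,x_d)\in\{1,2,3\}^d$, the entry of the defining tensor obtained from \eqref{eq: TT 3} equals the scalar $F(x_1)\,G(x_2)\cdots G(x_{d-1})\,L(x_d)$, where $F(x_1)$ is the $1\times 2$ row formed by the $x_1$-th component of each vector in the first core, $G(x_i)$ is the $2\times 2$ matrix formed analogously from a middle core, and $L(x_d)$ is the $2\times 1$ column obtained from the last core. Since every core entry is $0$ or $1$, all factors are non-negative and no cancellation can occur; hence an entry is non-zero precisely when the matrix product is strictly positive.

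First I would compute the three slices of each core explicitly. For the middle cores one finds $G(1)=G(3)=\mathrm{Id}$ and $G(2)=\left(\begin{smallmatrix}0&0\\1&0\end{smallmatrix}\right)=:N$, the crucial point being that $N^2=0$. Because the identity slices drop out of the product, the middle factor $G(x_2)\cdots G(x_{d-1})$ collapses to $N^{s}$, where $s$ is the number of middle indices $x_i$ equal to $2$. Nilpotency then forces the entry to vanish whenever $s\ge 2$, so only the cases $s=0$ and $s=1$ can contribute.

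Next I would count each surviving case separately using the boundary slices $F(1)=F(3)=(1\ 1)$, $F(2)=(1\ 0)$ and $L(1)=L(3)=(1\ 0)^{\mathsf T}$, $L(2)=(0\ 1)^{\mathsf T}$. When $s=0$ the middle factor is the identity and the entry is $F(x_1)\,L(x_d)$, which is positive for every pair $(x_1,x_d)$ except $x_1=x_d=2$; with the $d-2$ middle indices ranging over $\{1,3\}$ this yields $(9-1)\cdot 2^{d-2}=8\cdot 2^{d-2}$ non-zero entries. When $s=1$ the middle factor is $N$ and $F(x_1)\,N\,L(x_d)$ is positive exactly when $x_1,x_d\in\{1,3\}$; choosing the single middle index equal to $2$ in $d-2$ ways, the remaining $d-3$ middle indices in $\{1,3\}$, and $x_1,x_d$ in $\{1,3\}$ gives $(d-2)\cdot 2^{d-3}\cdot 2\cdot 2=(d-2)\cdot 2^{d-1}$ non-zero entries. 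Adding the two contributions and using $8\cdot 2^{d-2}=4\cdot 2^{d-1}$ produces $m=(4+d-2)\cdot 2^{d-1}=(d+2)\cdot 2^{d-1}$.

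The bookkeeping is routine once the matrix-product picture is in place; the main conceptual step, and the one I would be most careful about, is establishing the collapse of the middle product to $N^{s}$ together with the role of $N^2=0$, and correctly handling the boundary effects at the first and last cores (the pairs with $x_1=2$ or $x_d=2$ that silently kill terms). The edge case $d=2$, where there are no middle cores, is automatically consistent since the $s=1$ contribution carries the factor $d-2=0$ and the $s=0$ count reduces to the eight non-zero entries of the Sierpinski carpet matrix.
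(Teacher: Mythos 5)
Your proof is correct, but it takes a genuinely different route from the paper's. The paper counts the non-zero entries by summing \emph{all} entries of the tensor: since the defining tensor is binary, $m = \sum_{x_1,\dots,x_d} \mathbf{T}_{x_1,\dots,x_d}$, and this sum factorizes core-by-core into the transfer-matrix product
\begin{equation*}
\begin{pmatrix} 3 & 2 \end{pmatrix} \cdot \begin{pmatrix} 2 & 0 \\ 1 & 2 \end{pmatrix}^{d-2} \cdot \begin{pmatrix} 2 \\ 1 \end{pmatrix},
\end{equation*}
where each factor is a core summed over its physical index; evaluating the matrix power (which is $2^{d-2}\,\mathrm{Id} + (d-2)2^{d-3}N$, with the same nilpotent $N$ you use) yields $(d+2)\cdot 2^{d-1}$. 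You instead fix an index tuple and analyze the slice product $F(x_1)G(x_2)\cdots G(x_{d-1})L(x_d)$, using $G(1)=G(3)=\mathrm{Id}$ and $N^2=0$ to classify exactly which entries are non-zero, then count the two surviving cases $s=0$ and $s=1$. The trade-off is instructive: the paper's computation is shorter and purely mechanical, but its identification of the entry \emph{sum} with the entry \emph{count} silently relies on the tensor being binary, i.e.\ on no entry of the TT contraction exceeding $1$ --- a fact the paper asserts as part of its general setup but never verifies for this particular decomposition. Your slice analysis proves precisely this (every non-vanishing entry equals $1$), so your argument is self-contained where the paper's has a small implicit gap; it also pins down \emph{where} the non-zero entries sit (at most one interior index equal to $2$, and then only with $x_1,x_d\in\{1,3\}$), structural information of the kind the paper needs again in its connectedness proof in Appendix C.3. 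Your handling of the $d=2$ edge case and the no-cancellation remark (non-negativity of all factors) are both correct and welcome touches of care.
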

\begin{proof}
 Denote the defining tensor of the $d$-dimensional multisponge by $\mathbf{T}$, i.e.
 \begin{equation*}
 \begin{split}
  \mathbf{T} &= \sum_{k_0=1}^{r_0} \cdots  \sum_{k_d=1}^{r_d} \left( \mathbf{T}^{(1)} \right)_{k_0,:,k_1} \otimes \dots \otimes \left( \mathbf{T}^{(d)} \right)_{k_{d-1},:,k_d} \\
  & = \begin{bmatrix} \begin{pmatrix} 1 \\ 1 \\ 1 \end{pmatrix} & \begin{pmatrix} 1 \\ 0 \\ 1 \end{pmatrix} \end{bmatrix} \otimes \begin{bmatrix} \begin{pmatrix} 1 \\ 0 \\ 1 \end{pmatrix} & \begin{pmatrix} 0 \\ 0 \\ 0 \end{pmatrix} \\[0.5cm] \begin{pmatrix} 0 \\ 1 \\ 0 \end{pmatrix} & \begin{pmatrix} 1 \\ 0 \\ 1 \end{pmatrix} \end{bmatrix} \otimes \dots \otimes \begin{bmatrix} \begin{pmatrix} 1 \\ 0 \\ 1 \end{pmatrix} & \begin{pmatrix} 0 \\ 0 \\ 0 \end{pmatrix} \\[0.5cm] \begin{pmatrix} 0 \\ 1 \\ 0 \end{pmatrix} & \begin{pmatrix} 1 \\ 0 \\ 1 \end{pmatrix} \end{bmatrix} \otimes \begin{bmatrix} \begin{pmatrix} 1 \\ 0 \\ 1 \end{pmatrix} \\[0.5cm] \begin{pmatrix} 0 \\ 1 \\ 0 \end{pmatrix} \end{bmatrix},
 \end{split}
 \end{equation*}
 with $r_0 = r_d = 1$ and $r_1 = \dots = r_{d-1} = 2$. In order to count the non-zero entries of $\mathbf{T}$, we simply sum the over all dimensions since $\mathbf{T}$ is a binary tensor. That is, we obtain
 \begin{equation*}
 \begin{split}
  \sum_{x_1 = 1}^3 \dots \sum_{x_d = 1}^3 \mathbf{T}_{x_1 , \dots , x_d} & = \sum_{x_1 = 1}^3 \dots \sum_{x_d = 1}^3 \sum_{k_0=1}^{r_0} \cdots  \sum_{k_d=1}^{r_d} \left( \mathbf{T}^{(1)} \right)_{k_0,x_1,k_1} \otimes \dots \otimes \left( \mathbf{T}^{(d)} \right)_{k_{d-1},x_d,k_d}\\
  & = \left( \sum_{x_1 = 1}^3 \mathbf{T}^{(1)}_{:,x_1 , :}\right) \cdot \dots \cdot \left( \sum_{x_d = 1}^3 \mathbf{T}^{(d)}_{:,x_d , :}\right) \\
  & = \begin{pmatrix} 3 & 2 \end{pmatrix} \cdot \begin{pmatrix} 2 & 0 \\ 1 & 2 \end{pmatrix}^{d-2} \cdot   \begin{pmatrix} 2 \\ 1 \end{pmatrix} \\
  & = \begin{pmatrix} 3 & 2 \end{pmatrix} \cdot \begin{pmatrix} 2^{d-2} & 0 \\ (d-2)\cdot 2^{d-3} & 2^{d-2} \end{pmatrix}^{d-2} \cdot   \begin{pmatrix} 2 \\ 1 \end{pmatrix} \\
  & = (d+2) \cdot 2^{d-1}. \qedhere
 \end{split}
 \end{equation*}

\end{proof}

\subsection{Volume of the $d$-dimensional multisponge}\label{app: volume multisponge}

\begin{theorem}
 The volume of $C^d_\infty$, $d \geq 3$, is 0.
\end{theorem}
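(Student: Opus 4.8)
The plan is to reduce the statement to a single elementary inequality and then verify that inequality uniformly in $d$. Since $C^d_\infty$ is the nested intersection of the compact approximants $C^d_k$, its $d$-dimensional volume equals $V_\infty = \lim_{k\to\infty} V_k$ by continuity of Lebesgue measure from above on a finite measure space. Equation \eqref{eq: volume multisponge} already writes $V_k$ as a geometric sequence, $V_k = q^k$, with positive ratio
\[
q = \frac{(d+2)\cdot 2^{d-1}}{3^d}.
\]
A geometric sequence with $q > 0$ tends to $0$ precisely when $q < 1$. Hence the whole theorem collapses to showing $(d+2)\cdot 2^{d-1} < 3^d$ for all $d \geq 3$.

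First I would dispose of the base case $d = 3$ by direct computation: $(d+2)2^{d-1} = 5\cdot 4 = 20 < 27 = 3^3$. To cover every larger $d$ at once, I would show that $a_d := (d+2)(2/3)^d$ (which equals $2q$) is strictly decreasing. Its successive ratio is
\[
\frac{a_{d+1}}{a_d} = \frac{d+3}{d+2}\cdot\frac{2}{3} = \frac{2(d+3)}{3(d+2)},
\]
and $2(d+3) < 3(d+2)$ is equivalent to $d > 0$, so this ratio is below $1$ for every $d \geq 1$. Therefore $a_d \leq a_3 = 40/27 < 2$ for all $d \geq 3$, which is exactly the desired $q < 1$. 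Alternatively one can run a one-line induction: assuming $3^d > (d+2)2^{d-1}$, multiply by $3$ and use $3(d+2)\geq 2(d+3)$ to obtain $3^{d+1} > 3(d+2)2^{d-1} \geq 2(d+3)2^{d-1} = (d+3)2^d$, which is the claim for $d+1$.

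Either route gives $q < 1$, whence $V_\infty = \lim_{k\to\infty} q^k = 0$. I do not anticipate a real obstacle here; the only point needing care is that one must establish $q < 1$ uniformly in $d$ rather than merely for a few initial values, and it is precisely the monotonicity argument (or the induction) that supplies this uniformity. The one conceptual caveat worth flagging is the passage from the numerics $V_k\to 0$ to the geometric conclusion $\mathrm{vol}(C^d_\infty)=0$: this is justified because the $C^d_k$ form a decreasing sequence of compact sets, so their measures converge to the measure of the intersection, making the geometric-series estimate a genuine control on the volume of the limiting multisponge.
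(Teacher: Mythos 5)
Your proposal is correct and follows essentially the same route as the paper: both reduce the theorem to the inequality $(d+2)\cdot 2^{d-1} < 3^d$ for $d \geq 3$ and verify it via the base case $20 < 27$ together with an induction step (your one-line induction is the paper's argument in slightly rearranged form, and your monotonicity variant is an equivalent elementary check). The only addition is your explicit justification, via continuity of Lebesgue measure on the decreasing compact sets $C^d_k$, of the passage from $V_k \to 0$ to $\mathrm{vol}(C^d_\infty) = 0$, a step the paper leaves implicit.
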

\begin{proof}
 By induction, we show that the volume $V_k$ given in \eqref{eq: volume multisponge} approaches 0 for $k \rightarrow \infty$, i.e.~we show that
 \begin{equation*}
  (d+2)\cdot 2^{d-1} < 3^d,
  \tag{$\ast$}
 \end{equation*}
 for $d \geq 3$. Let $d = 3$, then
 \begin{equation*}
  (d+2)\cdot 2^{d-1} = 20 < 27 = 3^d. 
 \end{equation*}
 If ($\ast$) holds for some $d \geq 3$, then the statement also holds for $d + 1$ because
 \begin{equation*}
  (d+3)\cdot 2^{d} = (d+2)\cdot 2^{d-1}\cdot 2 + 2^d  < 3^d \cdot 2 + 2^d < 3^d \cdot 2 + 3^d = 3^{d+1}. \qedhere
 \end{equation*}

\end{proof}

\subsection{Connectedness of the $d$-dimensional multisponge}\label{app: connectedness multisponge}

\begin{theorem}
	The $d$-dimensional multisponge $C^d_\infty$, $d \geq 3$, is connected.
\end{theorem}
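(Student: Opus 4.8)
The plan is to realise $C^d_\infty$ as the limit of the nested family of finite approximations $C^d_k$ and to prove that each $C^d_k$ is connected, so that the attractor $C^d_\infty = \bigcap_{k} C^d_k$ inherits connectedness as a decreasing intersection of nonempty compact connected sets. As a preliminary step I would extract from the TT decomposition in \eqref{eq: TT 3} the explicit combinatorial description of the defining tensor: contracting the cores shows that $\mathbf{T}_{x_1,\dots,x_d}=1$ precisely when at most one index $x_i$ equals $2$. Geometrically this means that $C^d_1$ is the union of the closed subcubes $Q_{x_1,\dots,x_d}\subset[0,1]^{\times d}$ of side $1/3$ obtained by discarding every subcube having two or more coordinates equal to $2$, leaving the $(d+2)\cdot 2^{d-1}$ retained subcubes counted earlier, and that in general $C^d_k=\bigcup_{i} f_i(C^d_{k-1})$, where the $f_i$ are the $1/3$-contractions onto the retained subcubes.

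First I would record two elementary but essential facts. The sequence is nested, $C^d_{k+1}\subseteq C^d_k\subseteq[0,1]^{\times d}$, since $C^d_1\subseteq[0,1]^{\times d}$ and $f_i$ preserves inclusions; consequently $C^d_\infty=\bigcap_k C^d_k$. Moreover, every approximation contains the $1$-skeleton $E$ of the unit cube, i.e.\ the union of all its edges. This containment I would prove by induction on $k$: an edge in coordinate direction $\ell$ meets only subcubes with $x_m\in\{1,3\}$ for $m\neq\ell$ and $x_\ell\in\{1,2,3\}$, all of which are retained, and on each such subcube the relevant segment is again an edge, whence $E\subseteq\bigcup_i f_i(E)\subseteq C^d_k$.

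The heart of the argument is the inductive claim that every $C^d_k$ is connected, with base case $C^d_0=[0,1]^{\times d}$. For the step I would write $C^d_k=\bigcup_i f_i(C^d_{k-1})$, observe that each piece $f_i(C^d_{k-1})$ is connected as the continuous image of the connected set $C^d_{k-1}$, and then glue the pieces using the intersection-graph criterion: a union of connected sets is connected once the graph recording which pairs meet is connected. Two pieces sitting in face-adjacent retained subcubes $Q_i,Q_j$ do meet, because each contains the $1$-skeleton of its subcube (by the skeleton-containment fact), and these skeletons share the $1$-skeleton of the common facet, which is nonempty for $d\ge 3$. It then remains to check that the face-adjacency graph of the retained subcubes is connected; I would verify this by connecting an arbitrary retained subcube to the corner $(1,\dots,1)$, first moving any single coordinate of value $2$ to $1$, and then flipping every coordinate of value $3$ down to $1$ through the intermediate value $2$, which is legal because the intermediate subcube then has exactly one coordinate equal to $2$ and so remains retained.

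I expect the main obstacle to be the gluing step, specifically verifying that adjacent pieces genuinely intersect rather than merely lying in adjacent subcubes; this is exactly what the skeleton-containment fact is designed to supply, and getting its inductive formulation right — that the cube edges are covered at every scale and that the shared facet still carries edges in dimension $d\ge 3$ — is the delicate point. The closing topological fact, that a decreasing intersection of nonempty compact connected subsets of $\mathbb{R}^d$ is connected, is standard: separating a hypothetical disconnection of $C^d_\infty$ by disjoint open sets and using compactness traps some $C^d_N$ inside them, contradicting its connectedness. Invoking this fact concludes that $C^d_\infty$ is connected.
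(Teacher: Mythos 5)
Your proof is correct, but it takes a genuinely different route from the paper's. The paper reduces the claim to a purely combinatorial statement, namely that the defining tensor $\mathbf{T}^d$ is ``connected'' in the sense that any two non-zero entries are joined by a sequence of index tuples, each changing one index by $\pm 1$ while staying on non-zero entries, and proves this by induction on the dimension $d$: the slices $\mathbf{T}^{d+1}_{:,\dots,:,1,:}$ and $\mathbf{T}^{d+1}_{:,\dots,:,3,:}$ are copies of $\mathbf{T}^d$, and every non-zero entry of the middle slice $\mathbf{T}^{d+1}_{:,\dots,:,2,:}$ is adjacent to both. You instead prove the connectivity of the face-adjacency graph of retained subcubes directly (routing every retained cube to the corner $(1,\dots,1)$ using the ``at most one index equals $2$'' characterization), and your induction runs over the iteration index $k$ rather than over $d$. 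More importantly, you supply two ingredients that the paper leaves implicit: first, the $1$-skeleton containment lemma, which guarantees that pieces sitting in face-adjacent subcubes genuinely intersect at every finite stage --- the paper's index-path notion of connectedness does not by itself rule out that the limit sets inside adjacent cubes fail to touch; second, the standard fact that a decreasing intersection of nonempty compact connected sets is connected, which is what actually converts connectedness of the approximations $C^d_k$ into connectedness of $C^d_\infty$ (the paper simply asserts that connectedness of the defining tensor suffices). The paper's argument buys brevity and stays entirely at the tensor level, though its base case $d=3$ is justified only by inspection of a figure; your argument is longer but closes the combinatorial-to-topological gap rigorously and needs no appeal to pictures.
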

\begin{proof}
In order to prove the connectedness of the limit $C^d_\infty$, we have to show that the defining tensor of the $d$-dimensional multisponge is connected. That is, for two given non-zero entries $\mathbf{T}^d_{x_1 , \dots , x_d}$ and $\mathbf{T}^d_{y_1 , \dots , y_d} $ of the defining tensor $\mathbf{T}^d$ with
\begin{equation*}
\mathbf{T}^d = \begin{bmatrix} \begin{pmatrix} 1 \\ 1 \\ 1 \end{pmatrix} & \begin{pmatrix} 1 \\ 0 \\ 1 \end{pmatrix} \end{bmatrix} \otimes \begin{bmatrix} \begin{pmatrix} 1 \\ 0 \\ 1 \end{pmatrix} & \begin{pmatrix} 0 \\ 0 \\ 0 \end{pmatrix} \\[0.5cm] \begin{pmatrix} 0 \\ 1 \\ 0 \end{pmatrix} & \begin{pmatrix} 1 \\ 0 \\ 1 \end{pmatrix} \end{bmatrix} \otimes \dots \otimes \begin{bmatrix} \begin{pmatrix} 1 \\ 0 \\ 1 \end{pmatrix} & \begin{pmatrix} 0 \\ 0 \\ 0 \end{pmatrix} \\[0.5cm] \begin{pmatrix} 0 \\ 1 \\ 0 \end{pmatrix} & \begin{pmatrix} 1 \\ 0 \\ 1 \end{pmatrix} \end{bmatrix} \otimes \begin{bmatrix} \begin{pmatrix} 1 \\ 0 \\ 1 \end{pmatrix} \\[0.5cm] \begin{pmatrix} 0 \\ 1 \\ 0 \end{pmatrix} \end{bmatrix}
\end{equation*}
there has to exist a path from the index tuple $(x_1 , \dots , x_d)$ to the index tuple $(y_1 , \dots , y_d)$. Here, a path means a sequence of index tuples such that only one index $x_i$, $i \in \{1, \dots , d\}$, is increased/decreased by $1$ at each step of the sequence and the corresponding entries of $\mathbf{T}^d$ are equal to $1$ for all elements of the sequence. The proof is done by induction:

For $d=3$, we obtain the defining tensor of the Menger sponge. As one can see in Figure \ref{fig: menger} (b), this tensor is connected in the above sense. Now, we assume that the assertion holds for some $d \geq 3$, i.e.~the tensor $\mathbf{T}^d$ is connected. The defining tensor $\mathbf{T}^{d+1}$ satisfies
\begin{equation*}
\begin{split}
\mathbf{T}^{d+1}_{:, \dots , :, 1, :} & =  \mathbf{T}^{d+1}_{:, \dots , :, 3, :} \\ 
& =\begin{bmatrix} \begin{pmatrix} 1 \\ 1 \\ 1 \end{pmatrix} & \begin{pmatrix} 1 \\ 0 \\ 1 \end{pmatrix} \end{bmatrix} \otimes \begin{bmatrix} \begin{pmatrix} 1 \\ 0 \\ 1 \end{pmatrix} & \begin{pmatrix} 0 \\ 0 \\ 0 \end{pmatrix} \\[0.5cm] \begin{pmatrix} 0 \\ 1 \\ 0 \end{pmatrix} & \begin{pmatrix} 1 \\ 0 \\ 1 \end{pmatrix} \end{bmatrix} \otimes \dots \otimes \begin{bmatrix} \begin{pmatrix} 1 \\ 0 \\ 1 \end{pmatrix} & \begin{pmatrix} 0 \\ 0 \\ 0 \end{pmatrix} \\[0.5cm] \begin{pmatrix} 0 \\ 1 \\ 0 \end{pmatrix} & \begin{pmatrix} 1 \\ 0 \\ 1 \end{pmatrix} \end{bmatrix} \otimes  \begin{bmatrix} \begin{pmatrix} 1  \end{pmatrix} & \begin{pmatrix} 0  \end{pmatrix} \\[0.1cm] \begin{pmatrix} 0  \end{pmatrix} & \begin{pmatrix} 1  \end{pmatrix} \end{bmatrix} \otimes \begin{bmatrix} \begin{pmatrix} 1 \\ 0 \\ 1 \end{pmatrix} \\[0.5cm] \begin{pmatrix} 0 \\ 1 \\ 0 \end{pmatrix} \end{bmatrix}\\
& =\begin{bmatrix} \begin{pmatrix} 1 \\ 1 \\ 1 \end{pmatrix} & \begin{pmatrix} 1 \\ 0 \\ 1 \end{pmatrix} \end{bmatrix} \otimes \begin{bmatrix} \begin{pmatrix} 1 \\ 0 \\ 1 \end{pmatrix} & \begin{pmatrix} 0 \\ 0 \\ 0 \end{pmatrix} \\[0.5cm] \begin{pmatrix} 0 \\ 1 \\ 0 \end{pmatrix} & \begin{pmatrix} 1 \\ 0 \\ 1 \end{pmatrix} \end{bmatrix} \otimes \dots \otimes \begin{bmatrix} \begin{pmatrix} 1 \\ 0 \\ 1 \end{pmatrix} & \begin{pmatrix} 0 \\ 0 \\ 0 \end{pmatrix} \\[0.5cm] \begin{pmatrix} 0 \\ 1 \\ 0 \end{pmatrix} & \begin{pmatrix} 1 \\ 0 \\ 1 \end{pmatrix} \end{bmatrix}  \otimes \begin{bmatrix} \begin{pmatrix} 1 \\ 0 \\ 1 \end{pmatrix} \\[0.5cm] \begin{pmatrix} 0 \\ 1 \\ 0 \end{pmatrix} \end{bmatrix}\\
&= \mathbf{T}^d.
\end{split}
\end{equation*}
Furthermore, 
\begin{equation*}
\mathbf{T}^{d+1}_{x_1, \dots , x_{d+1}} = 1,
\end{equation*}
if each index $x_i$ is equal to $1$ or $3$. Considering $\mathbf{T}^{d+1}_{:, \dots , :, 2, :}$, it holds that 
\begin{equation*}
\mathbf{T}^{d+1}_{x_1, \dots , x_{d-1}, 2, x_{d+1}} = 1,
\end{equation*}
if (and only if) $x_i = 1 $ or $x_i = 3$ for all $i \in \{1, \dots , d+1\}\backslash\{d\}$. Thus, each index tuple $(x_1 , \dots , x_{d-1}, 2, x_{d+1})$ with $\mathbf{T}^{d+1}_{x_1, \dots , x_{d-1}, 2, x_{d+1}} = 1$ is connected to $(x_1 , \dots , x_{d-1}, 1, x_{d+1})$ as well as to $(x_1 , \dots , x_{d-1}, 3, x_{d+1})$. In other words, every non-zero entry of $\mathbf{T}^{d+1}_{:, \dots , :, 2, :}$ is connected to $\mathbf{T}^{d+1}_{:, \dots , :, 1, :}$ and $\mathbf{T}^{d+1}_{:, \dots , :, 3, :}$. Since these both tensors are equal to $\mathbf{T}^d$ and therefore are internally connected, the whole tensor $\mathbf{T}^{d+1}$ is connected. \qedhere
\end{proof}

\section{Defining matrices for RGB fractals}\label{app: color}

\noindent Defining matrices corresponding to Figure \ref{fig: color a}:
\begin{equation*}
\begin{split}
M_R  = \begin{pmatrix} \frac{1}{2} & 1 & \frac{1}{2} \\[0.1cm] 1 & \frac{1}{2} & 1 \\[0.1cm] \frac{1}{2} & 1 & \frac{1}{2} \end{pmatrix}, \quad
M_G = \begin{pmatrix} \frac{3}{4} & 1 & \frac{3}{4} \\[0.1cm] 1 & 1 & 1 \\[0.1cm] \frac{3}{4} & 1 & \frac{3}{4} \end{pmatrix}, \quad
M_B = \begin{pmatrix} 1 & \frac{3}{4} & 1 \\[0.1cm] \frac{3}{4} & 1 & \frac{3}{4} \\[0.1cm] 1 & \frac{3}{4} & 1 \end{pmatrix}.
\end{split}
\end{equation*}

\noindent Defining matrices corresponding to Figure \ref{fig: color b}:
\begin{equation*}
M_R = \begin{pmatrix} \frac{1}{2} & \frac{3}{4} &\frac{3}{4} &\frac{1}{2} \\[0.1cm] \frac{3}{4} & 1 & 1 & \frac{3}{4} \\[0.1cm] \frac{3}{4} & 1 & 1 & \frac{3}{4} \\[0.1cm] \frac{1}{2} & \frac{3}{4} & \frac{3}{4} & \frac{1}{2} \end{pmatrix}, \quad
M_G = \begin{pmatrix} 1 &\frac{1}{2} &\frac{1}{2} &1 \\[0.1cm] \frac{1}{2} &\frac{3}{4} &\frac{3}{4} &\frac{1}{2} \\[0.1cm] \frac{1}{2} &\frac{3}{4} &\frac{3}{4} &\frac{1}{2} \\[0.1cm] 1 &\frac{1}{2} &\frac{1}{2} &1\end{pmatrix}, \quad
M_B = \begin{pmatrix} \frac{3}{4} & 1 & 1 &\frac{3}{4} \\[0.1cm] 1 &\frac{1}{2} &\frac{1}{2} &1 \\[0.1cm] 1 &\frac{1}{2} &\frac{1}{2} &1 \\[0.1cm] \frac{3}{4} &1 &1 &\frac{3}{4} \end{pmatrix}.
\end{equation*}

\noindent Defining matrices corresponding to Figure \ref{fig: color c}:
\begin{equation*}
M_R = \begin{pmatrix} \frac{1}{4} & \frac{1}{2} & 1 & \frac{1}{2} &\frac{1}{4} \\[0.1cm] \frac{1}{2} &1 &1 &1 &\frac{1}{2} \\[0.1cm] 1 &1 &\frac{1}{2} &1 &1 \\[0.1cm] \frac{1}{2} &\frac{1}{2} &\frac{1}{4} &\frac{1}{2} &\frac{1}{2} \\[0.1cm] \frac{1}{2} &\frac{1}{4} &\frac{1}{4} &\frac{1}{4} &\frac{1}{2}\end{pmatrix}, \quad
M_G = \begin{pmatrix} \frac{1}{4} &\frac{1}{4} &\frac{1}{2} &\frac{1}{4} &\frac{1}{4} \\[0.1cm] \frac{1}{4} &\frac{1}{2}  &1 &\frac{1}{2}  &\frac{1}{4} \\[0.1cm] \frac{1}{2} &1 &1 &1 &\frac{1}{2} \\[0.1cm] 1 &1 &\frac{1}{2} &1 &1 \\[0.1cm] \frac{1}{2} &\frac{1}{2} &\frac{1}{4} &\frac{1}{2} &\frac{1}{2} \end{pmatrix}, \quad 
M_B = \begin{pmatrix} \frac{1}{4} &\frac{1}{4} &\frac{1}{4} &\frac{1}{4} &\frac{1}{4} \\[0.1cm] \frac{1}{4} &\frac{1}{4} &\frac{1}{2} &\frac{1}{4} &\frac{1}{4} \\[0.1cm] \frac{1}{4} &\frac{1}{2}  &1 &\frac{1}{2}  &\frac{1}{4} \\[0.1cm] \frac{1}{2} &1 &1 &1 &\frac{1}{2} \\[0.1cm] 1 &1 &\frac{1}{2} &1 &1\end{pmatrix}.
\end{equation*}
\end{document}